\theoremstyle{plain}
\newtheorem{thm}{Theorem}[section]
\newtheorem{cor}[thm]{Corollary}
\newtheorem{obs}[thm]{Observation}
\newtheorem{prop}[thm]{Proposition}
\theoremstyle{definition}
\DeclareMathOperator{\cyc}{cyc}
\newcommand{\ml}{\mathcal{L}}
\newcommand{\bp}{\mathcal{B}}
\newcommand{\mt}{\mathcal{T}}
\newcommand{\qbinom}[2]{\genfrac{[}{]}{0pt}{}{#1}{#2}}
\newcommand{\lbinom}[2]{\genfrac{\{}{\}}{0pt}{}{#1}{#2}}
\title{Unimodality via alternating gamma vectors}
\author[C. Brittenham]{Charles Brittenham}
\author[A. Carroll]{Andrew Carroll}
\author[T. K. Petersen]{T. Kyle Petersen}
\author[C. Thomas]{Connor Thomas}
\address{Department of Mathematical Sciences, DePaul University, Chicago, IL, USA}
\email{tpeter21@depaul.edu}
\urladdr{http://math.depaul.edu/tpeter21/}
\thanks{Work of Carroll partially supported by a faculty summer research grant from the College of Science and Health at DePaul University. Work of Petersen partially supported by a Simons Foundation collaboration grant. Work of Thomas supported by an undergraduate research assistantship from the College of Science and Health at DePaul University.}
\begin{document}

\begin{abstract}
For a polynomial with palindromic coefficients, unimodality is equivalent to having a nonnegative $g$-vector. A sufficient condition for unimodality is having a nonnegative $\gamma$-vector, though one can have negative entries in the $\gamma$-vector and still have a nonnegative $g$-vector.

In this paper we provide combinatorial models for three families of $\gamma$-vectors that alternate in sign. In each case, the $\gamma$-vectors come from unimodal polynomials with straightforward combinatorial descriptions, but for which there is no straightforward combinatorial proof of unimodality. 

By using the transformation from $\gamma$-vector to $g$-vector, we express the entries of the $g$-vector combinatorially, but as an alternating sum. In the case of the $q$-analogue of $n!$, we use a sign-reversing involution to interpret the alternating sum, resulting in a manifestly positive formula for the $g$-vector. In other words, we give a combinatorial proof of unimodality. We consider this a ``proof of concept" result that we hope can inspire a similar result for the other two cases, $\prod_{j=1}^n (1+q^j)$ and the $q$-binomial coefficient $\qbinom{n}{k}$.
\end{abstract}

\maketitle

\section{Introduction}

A sequence of numbers $a_1, a_2, \ldots$ is \emph{unimodal} if it never increases after the first time it decreases, i.e., if for some index $k$ we have $a_1 \leq \cdots \leq a_{k-1} \leq a_k \geq a_{k+1} \geq \cdots$. Unimodality problems abound in algebraic, enumerative, and topological combinatorics.  Many of the interesting examples involve Hilbert series of certain graded algebras, and combinatorial invariants of polytopes related to both face enumeration and lattice-point enumeration. Surveys on the topic include one by Stanley \cite{Stanleyunimodal}, another by Brenti \cite{Brentiunimodal}, and more recently, one from Br\"and\'en \cite{Brandensurvey}.

Unimodality can be surpisingly difficult to prove combinatorially, even when there is a good combinatorial understanding of the sequence. See, e.g., Zeilberger's discussion in \cite{Z}. Some stronger properties that imply unimodality include \emph{log-concavity}, \emph{real-rootedness} (of the corresponding generating function), and, under the assumption that the sequence is palindromic, \emph{gamma-nonnegativity}.

The main purpose of this paper is to show that there are interesting families of unimodal sequences whose gamma vectors (defined in Section \ref{sec:bases}) are not nonnegative, but in fact alternate in sign. Moreover, we give a combinatorial paradigm for how unimodality can be deduced from such gamma vectors. Specifically, we will discuss three families of polynomials: 
\begin{itemize}
\item the $q$-analogue of $n!$, $[n]! = \prod_{i=1}^n (1-q^i)/(1-q)$,
\item the $q$-binomial coefficients, $\qbinom{n}{k} = \frac{[n]!}{[k]![n-k]!}$, and 
\item the polynomials $\prod_{j=1}^n (1+q^j)$. 
\end{itemize}

For the first example, $[n]!$, unimodality follows easily from a lemma that says products of unimodal and palindromic polynomials are unimodal \cite{Andrews1}. It is well known that $[n]!$ is the generating function for permutations according to the number of inversions, so a combinatorial explanation of unimodality can be given with a family of injections that take permutations with $k-1$ inversions to permutations with $k$ inversions. Such maps are implicit in the fact that there is a symmetric chain decomposition of the Bruhat order on the symmetric group \cite[Section 7]{Stanley1980}. 

For the $q$-binomial coefficients, we have that $\qbinom{n}{k}$ counts lattice paths in a $k\times (n-k)$ box according to area below the path. Despite this simple interpretation, a combinatorial proof of unimodality was elusive for a long time. Stanley showed how unimodality follows from algebraic considerations \cite[Theorem 11]{Stanleyunimodal}, and a combinatorial proof was given by O'Hara in the 1990s, in what was a considered a combinatorial tour-de-force \cite{ohara}. See \cite{Z} for an exposition. 

The third kind of polynomial we discuss is given by $\prod_{j=1}^n (1+q^j)$. These polynomials count integer partitions with distinct parts of size at most $n$. Proof of unimodality follows from work of Odlyzko and Richardson \cite{OR} using analytic techniques. Stanley gave an algebraic explanation for unimodality and highlighted the absence of a combinatorial proof of unimodality in \cite[Example 3]{Stanleyunimodal}. To date, there is still no combinatorial proof of unimodality. See \cite{StanleyZanello} for recent related results.

In this paper, we provide a roadmap for proving unimodality for all these polynomials (Section \ref{sec:indirect}). The final step in the process is to find a sign-reversing involution on a set of decorated ballot paths, where the decorations depend on the family of polynomials under consideration. Sadly, we have only been clever enough to identify the sign-reversing involution in the simplest case, of $[n]!$. We provide details of the other two cases in the hope that others can use this idea to give new, combinatorial proofs of unimodality.

\section{Background and terminology}

\subsection{Two bases for palindromic polynomials}\label{sec:bases}

Suppose $h = \sum h_i q^i$ is a polynomial with integer coefficients
such that $q^dh(1/q) = h(q)$ for some positive integer $d$.  Then $h_i
= h_{d-i}$ and we will call such polynomial \emph{palindromic}. It is
easy to verify that if such a $d$ exists, it is the sum of largest and
smallest powers of $q$ appearing in $h$. We define this integer $d$ to
be the \emph{palindromic degree} of $h$. 

Palindromic polynomials of palindromic degree $d$ span a vector space of dimension roughly $d/2$, and with this in mind, we can express such polynomials in the bases
\[
 G_d = \{ q^i + \cdots + q^{d-i}\}_{0\leq 2i \leq d}
\]
and
\[
 \Gamma_d = \{ q^i (1+q)^{d-2i} \}_{0\leq 2i \leq d}.
\]
That is, if $h$ is palindromic there are vectors $g = (g_0, g_1, g_2, \ldots, g_{\lfloor d/2 \rfloor})$ and $\gamma = (\gamma_0, \gamma_1,\ldots, \gamma_{\lfloor d/2 \rfloor})$ such that
\begin{align}
 h(q) &= \sum_{0\leq 2i \leq d} g_i \sum_{j=i}^{d-i} q^j,\\
  &= \sum_{0 \leq 2i \leq d } \gamma_i q^i(1+q)^{d-2i}.
\end{align}
The vectors of coefficients are known as the \emph{$g$-vector} and
\emph{$\gamma$-vector} of $h$, respectively. 

It will be convenient to define the $\gamma$-polynomial of $h$ by
$\gamma_h(z)=\sum_{i=0}^{\lfloor d/2\rfloor} \gamma_i z^i$ and
the $g$-polynomial of $h$ by $g_h(z)=\sum_{i=0}^{\lfloor
  d/2\rfloor} g_i z^i$.  The general study of $g$-vectors and $g$-polynomials dates to McMullen's conjectures of the early 1970s, see \cite{Stanleycomm}, while work on $\gamma$-polynomials dates at least to 1985 work of Andrews \cite{Andrews} (and appears notably in 1970 work of Foata and Sch\"utzenberger on Eulerian polynomials \cite{FS}). More recently Gal made connections between $\gamma$-vectors and the same sort of topological questions that inspired work with $g$-polynomials \cite{Gal}.
  
The definition of the $\gamma$-polynomial immediately yields the following observation about the multiplicative nature of $\gamma$-polynomials. 
    
%

\begin{obs}\label{obs:gamma-poly-factors}
  If $h$ and $k$ are palindromic polynomials, then $h\cdot
  k$ is palindromic and its $\gamma$-polynomial is $\gamma_{h\cdot
    k} = \gamma_h \cdot \gamma_k$.  
\end{obs}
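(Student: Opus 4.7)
The observation has two pieces, and I would take them in order: first show $h\cdot k$ is palindromic with palindromic degree $d_h + d_k$, then derive the convolution identity for $\gamma$-polynomials.

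For palindromicity, I would multiply the two defining identities $q^{d_h}h(1/q) = h(q)$ and $q^{d_k}k(1/q) = k(q)$ directly to obtain $q^{d_h + d_k}(hk)(1/q) = (hk)(q)$. To check that $d_h + d_k$ really is the palindromic degree, and not merely a valid exponent, I would observe that the smallest (resp. largest) power of $q$ in $hk$ equals the sum of the smallest (resp. largest) powers in $h$ and $k$ separately, and by the characterization given in the paper the palindromic degree is the sum of these extreme exponents. So $\deg_{\mathrm{pal}}(hk) = d_h + d_k$.

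For the $\gamma$-polynomial identity, the key structural fact is that the basis $\Gamma_d$ is \emph{multiplicatively closed across degrees}:
$$\bigl(q^i (1+q)^{d_h - 2i}\bigr) \cdot \bigl(q^j(1+q)^{d_k - 2j}\bigr) = q^{i+j}(1+q)^{(d_h + d_k) - 2(i+j)},$$
which is an element of $\Gamma_{d_h + d_k}$. So if I substitute the $\Gamma$-expansions for $h$ and $k$ into the product and reindex by $\ell = i + j$, I get
$$h(q)\,k(q) = \sum_\ell \Biggl(\sum_{i + j = \ell} \gamma_i^h \gamma_j^k\Biggr)\, q^\ell (1+q)^{(d_h + d_k) - 2\ell}.$$
Since $\Gamma_{d_h + d_k}$ is a basis for palindromic polynomials of degree $d_h + d_k$, I can read off $\gamma_\ell^{hk} = \sum_{i+j=\ell} \gamma_i^h\,\gamma_j^k$, which is precisely the coefficient of $z^\ell$ in $\gamma_h(z)\gamma_k(z)$.

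There is no real obstacle here; both steps are direct from the definitions. The one subtle point I would flag is making sure the reindexing respects the range $0 \le 2\ell \le d_h + d_k$: this is automatic because $\gamma_i^h$ vanishes for $2i > d_h$ and $\gamma_j^k$ vanishes for $2j > d_k$, so the convolution sum is supported exactly where the basis element $q^\ell(1+q)^{d_h+d_k-2\ell}$ is defined.
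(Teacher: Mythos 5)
Your proof is correct and follows the same route the paper has in mind: the paper states this observation as immediate from the definition, and your argument simply fills in the evident details (multiplying the defining palindromicity identities, noting that products of $\Gamma$-basis elements are again $\Gamma$-basis elements, and reading off the convolution of coefficients by linear independence). Nothing further is needed.
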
 

In particular, to calculate the $\gamma$-vectors of products of polynomials, it is enough to calculate the $\gamma$-vectors of their factors. Notice that the $g$-polynomial is \emph{not} multiplicative in the same way, e.g., the $g$-polynomial of $1+q$ is $g_{1+q}(z)=1$, while the $g$-polynomial of $(1+q)^2$ is $g_{(1+q)^2}(z) = 1+z \neq 1^2$.

\subsection{Ballot paths}

In \cite[Section 6]{NPT} Nevo, Petersen, and Tenner give the following transformation that provides a change of basis from $\Gamma_d$ to $G_d$:
\[
B_d:=\left[ \binom{d-2j}{i-j}-\binom{d-2j}{i-j-1}\right]_{0 \leq i,j \leq d/2}.
\]
These are lower triangular matrices with ones on the diagonal, so they are invertible and their inverses are also lower triangular with ones on the diagonal.

Let $B_d(i,j)$ denote the $(i,j)$ entry of this change of basis matrix, i.e.,
\[
 B_d(i,j) = \binom{d-2j}{i-j}-\binom{d-2j}{i-j-1}.
\]
A standard combinatorial interpretation for these numbers is the number of \emph{ballot paths} of length $d-2j$ with $i-j$ North steps. Recall ballot paths are lattice paths that start at $(0,0)$ and take steps ``East'' from $(i,j)$ to $(i+1,j)$ and ``North'' from $(i,j)$ to $(i,j+1)$, such that the path never crosses above the line $y=x$ in the cartesian plane. (Ballot paths that end on the diagonal at $(n,n)$ are known as \emph{Dyck paths}, and are counted by Catalan numbers.) 

In terms of words, we can encode ballot paths as words on the alphabet
$\{N, E\}$ such that no initial segment of the word contains more
letters $N$ than letters $E$. The  $\binom{6}{2}-\binom{6}{1}=9$
ballot paths of length six with two North steps are displayed in
Figure \ref{fig:ballot-paths}.

\begin{figure}
\[
 \begin{tikzpicture}[scale=.5]
 \draw[dotted] (0,0) grid (4,4);
 \draw[dashed] (-.5,-.5)--(4.5,4.5);
  \draw (0,0) node[circle, inner sep=2, fill=black] {} -- (1,0) node[circle, inner sep=2, fill=black] {} -- (1,1) node[circle, inner sep=2, fill=black] {} -- (2,1) node[circle, inner sep=2, fill=black] {}-- (2,2) node[circle, inner sep=2, fill=black] {}-- (3,2) node[circle, inner sep=2, fill=black] {} -- (4,2) node[circle, inner sep=2, fill=black] {};
  \draw (2,-1) node {$ENENEE$};
 \end{tikzpicture}
 \quad
 \begin{tikzpicture}[scale=.5]
 \draw[dotted] (0,0) grid (4,4);
 \draw[dashed] (-.5,-.5)--(4.5,4.5);
  \draw (0,0) node[circle, inner sep=2, fill=black] {} -- (1,0) node[circle, inner sep=2, fill=black] {} -- (1,1) node[circle, inner sep=2, fill=black] {} -- (2,1) node[circle, inner sep=2, fill=black] {}-- (3,1) node[circle, inner sep=2, fill=black] {}-- (3,2) node[circle, inner sep=2, fill=black] {} -- (4,2) node[circle, inner sep=2, fill=black] {};
  \draw (2,-1) node {$ENEENE$};
 \end{tikzpicture}
 \quad
 \begin{tikzpicture}[scale=.5]
 \draw[dotted] (0,0) grid (4,4);
 \draw[dashed] (-.5,-.5)--(4.5,4.5);
  \draw (0,0) node[circle, inner sep=2, fill=black] {} -- (1,0) node[circle, inner sep=2, fill=black] {} -- (1,1) node[circle, inner sep=2, fill=black] {} -- (2,1) node[circle, inner sep=2, fill=black] {}-- (3,1) node[circle, inner sep=2, fill=black] {}-- (4,1) node[circle, inner sep=2, fill=black] {} -- (4,2) node[circle, inner sep=2, fill=black] {};
  \draw (2,-1) node {$ENEEEN$};
 \end{tikzpicture}
\]
\[
 \begin{tikzpicture}[scale=.5]
 \draw[dotted] (0,0) grid (4,4);
 \draw[dashed] (-.5,-.5)--(4.5,4.5);
  \draw (0,0) node[circle, inner sep=2, fill=black] {} -- (1,0) node[circle, inner sep=2, fill=black] {} -- (2,0) node[circle, inner sep=2, fill=black] {} -- (2,1) node[circle, inner sep=2, fill=black] {}-- (2,2) node[circle, inner sep=2, fill=black] {}-- (3,2) node[circle, inner sep=2, fill=black] {} -- (4,2) node[circle, inner sep=2, fill=black] {};
  \draw (2,-1) node {$EENNEE$};
 \end{tikzpicture}
 \quad
 \begin{tikzpicture}[scale=.5]
 \draw[dotted] (0,0) grid (4,4);
 \draw[dashed] (-.5,-.5)--(4.5,4.5);
  \draw (0,0) node[circle, inner sep=2, fill=black] {} -- (1,0) node[circle, inner sep=2, fill=black] {} -- (2,0) node[circle, inner sep=2, fill=black] {} -- (2,1) node[circle, inner sep=2, fill=black] {}-- (3,1) node[circle, inner sep=2, fill=black] {}-- (3,2) node[circle, inner sep=2, fill=black] {} -- (4,2) node[circle, inner sep=2, fill=black] {};
  \draw (2,-1) node {$EENENE$};
 \end{tikzpicture}
 \quad
 \begin{tikzpicture}[scale=.5]
 \draw[dotted] (0,0) grid (4,4);
 \draw[dashed] (-.5,-.5)--(4.5,4.5);
  \draw (0,0) node[circle, inner sep=2, fill=black] {} -- (1,0) node[circle, inner sep=2, fill=black] {} -- (2,0) node[circle, inner sep=2, fill=black] {} -- (2,1) node[circle, inner sep=2, fill=black] {}-- (3,1) node[circle, inner sep=2, fill=black] {}-- (4,1) node[circle, inner sep=2, fill=black] {} -- (4,2) node[circle, inner sep=2, fill=black] {};
  \draw (2,-1) node {$EENEEN$};
 \end{tikzpicture}
\]
\[
 \begin{tikzpicture}[scale=.5]
 \draw[dotted] (0,0) grid (4,4);
 \draw[dashed] (-.5,-.5)--(4.5,4.5);
  \draw (0,0) node[circle, inner sep=2, fill=black] {} -- (1,0) node[circle, inner sep=2, fill=black] {} -- (2,0) node[circle, inner sep=2, fill=black] {} -- (3,0) node[circle, inner sep=2, fill=black] {}-- (3,1) node[circle, inner sep=2, fill=black] {}-- (3,2) node[circle, inner sep=2, fill=black] {} -- (4,2) node[circle, inner sep=2, fill=black] {};
  \draw (2,-1) node {$EEENNE$};
 \end{tikzpicture}
 \quad
 \begin{tikzpicture}[scale=.5]
 \draw[dotted] (0,0) grid (4,4);
 \draw[dashed] (-.5,-.5)--(4.5,4.5);
  \draw (0,0) node[circle, inner sep=2, fill=black] {} -- (1,0) node[circle, inner sep=2, fill=black] {} -- (2,0) node[circle, inner sep=2, fill=black] {} -- (3,0) node[circle, inner sep=2, fill=black] {}-- (3,1) node[circle, inner sep=2, fill=black] {}-- (4,1) node[circle, inner sep=2, fill=black] {} -- (4,2) node[circle, inner sep=2, fill=black] {};
  \draw (2,-1) node {$EEENEN$};
 \end{tikzpicture}
 \quad
 \begin{tikzpicture}[scale=.5]
 \draw[dotted] (0,0) grid (4,4);
 \draw[dashed] (-.5,-.5)--(4.5,4.5);
  \draw (0,0) node[circle, inner sep=2, fill=black] {} -- (1,0) node[circle, inner sep=2, fill=black] {} -- (2,0) node[circle, inner sep=2, fill=black] {} -- (3,0) node[circle, inner sep=2, fill=black] {}-- (4,0) node[circle, inner sep=2, fill=black] {}-- (4,1) node[circle, inner sep=2, fill=black] {} -- (4,2) node[circle, inner sep=2, fill=black] {};
  \draw (2,-1) node {$EEEENN$};
 \end{tikzpicture}
\]
\caption{Ballot paths of length 6 with 2 North steps.}\label{fig:ballot-paths}
\end{figure}
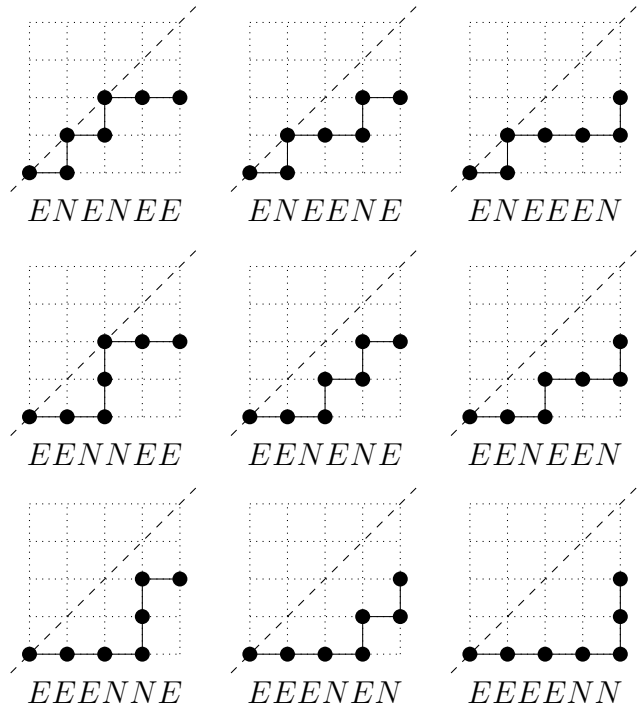

Let $\bp_d(i,j)$ denote the set of paths of length $d-2j$ with $i-j$ North steps, so that $|\bp_d(i,j)| = B_d(i,j)$. We will have more to say about ballot paths in Section \ref{sec:gamma-to-g}.

\subsection{Unimodality}\label{sec:unimodal}

A polynomial $h(q) = \sum h_i q^i$ is called \emph{unimodal} if there is an index $k$ such that
\[
h_0 \leq \cdots \leq h_{k-1} \leq h_k \geq h_{k+1} \geq \cdots,
\] 
When $h$ is palindromic, unimodality is equivalent to saying that the $g$-vector is nonnegative, i.e.,
\[
 g_i = h_i-h_{i-1} \geq 0 ~\textrm{for $i\leq d/2$},
\]
where $d$ is the palindromic degree of $h$.

The elements of the basis $\Gamma_d$ are obviously unimodal. Thus a sufficient condition for unimodality of $h$ is that its $\gamma$-vector is nonnegative. However, this condition is far from necessary. 

For example, consider
\[
 h(q) = 1 + 3q+5q^2 + 6q^3 + 5q^4 + 3q^5 + q^6.
\]
This polynomial is clearly palindromic and unimodal. Its $g$-vector is $g = (1, 2, 2, 1)$, since
\begin{align*}
 h(q) &= 1\cdot (1+q+q^2 +q^3+q^4+q^5+q^6) \\
     & +2\cdot (q+q^2 +q^3+q^4+q^5)\\
     & +2\cdot (q^2 +q^3+q^4)\\
     & +1 \cdot (q^3),
\end{align*}
while its $\gamma$-vector is $\gamma= (1, -3,2,0)$, since
\begin{align*}
h(q) &= 1\cdot (1+6q+15q^2 +20q^3+15q^4+6q^5+q^6) \\
     & -3\cdot (q+4q^2 +6q^3+4q^4+q^5)\\
     & +2\cdot (q^2 +2q^3+q^4)\\
     & +0 \cdot (q^3).
\end{align*}

\subsection{A paradigm for proving unimodality}\label{sec:indirect}

The obvious approach to proving unimodality for a palindromic polynomial is to consider the entries of the $g$-vector directly. In a combinatorial setting, we want to know: what do the entries of the $g$-vector count?

In general, this is not as easy as it appears, even for simple examples. See Zeilberger's article on this topic in combinatorial enumeration \cite{Z}.

Our main purpose with this article is to give a new paradigm for combinatorial proofs of unimodality. We consider examples in which the combinatorial description of the $g$-vector is difficult or nonexistent, but for which the $\gamma$-vector is not too bad. Then we use the change of basis matrix $B_d$ to write
\begin{equation}\label{eq:gamtog}
 g_i = \sum_{j\geq 0} \gamma_j B_d(i,j).
\end{equation}

Now, if the $\gamma$-vector contained only nonnegative entries, unimodality follows immediately, so we focus on ``interesting'' examples, in which the entries of the $\gamma$-vector alternate between positive and negative.  This means that Equation \eqref{eq:gamtog} is an alternating sum formula. Such formulas can be found throughout mathematics, and in combinatorics, the way one understands such a formula is via a ``sign-reversing involution." See Benjamin and Quinn's excellent introduction to the subject \cite{BQ}.

For example, with $h$ as in the previous section, we have that $h$ is palindromic of degree $d=6$ and $\gamma=(1,-3,2,0)$. The transformation $B_6$ is
\[
 B_6 = \left[ \begin{array}{cccc} 1 & 0 & 0 & 0 \\
  5 & 1 & 0 & 0 \\
  9 & 3 & 1 & 0 \\
  5 & 2 & 1 & 1
  \end{array} \right].
\]
Computing $B_6 \gamma$ we get
\[
 B_6 \gamma = 1\cdot \left[ \begin{array}{c} 1 \\ 5 \\ 9 \\ 5 \end{array}\right] - 3\cdot \left[ \begin{array}{c} 0 \\ 1 \\ 3 \\ 2 \end{array}\right] + 2\cdot \left[ \begin{array}{c} 0 \\ 0 \\ 1 \\ 1 \end{array}\right] + 0 \cdot \left[ \begin{array}{c} 0 \\ 0 \\ 0 \\ 1 \end{array}\right] = \left[ \begin{array}{c} 1 \\ 2 \\ 2 \\ 1 \end{array}\right] = g.
\]

That is,
\begin{align*}
g_0 &= 1\cdot 1 = 1,\\
g_1 &= 1\cdot 5 - 3 \cdot 1 =2,\\
g_2 &= 1\cdot 9 - 3\cdot 3 + 2\cdot 1 =2,\\
 g_3 &= 1 \cdot 5 - 3\cdot 2 + 2 \cdot 1 + 0 \cdot 1 =1.
\end{align*}

In Section \ref{sec:gamma-to-g}, we will see that $\gamma_j B_d(i,j)$ counts pairs of the form (matching, path) and that the parity of $j$ determines the sign of the pair. We will describe a matching on these pairs that puts together pairs of opposite sign. Then $g_i$ counts the pairs that have no partner in the matching. If we construct our matching optimally, all the leftovers will have the same (positive) sign.   

\section{Three families of alternating gamma vectors}
\label{sec:gamma-vectors-three}

In this section, we will present three families of palindromic polynomials whose gamma vectors alternate in sign. The particular $\gamma_j$ we come up with are described in terms of interesting and well-studied polynomials: the \emph{Fibonacci polynomials}, the \emph{Lucas polynomials}, and the \emph{lucanomial (fibinomial) coefficients}. Our discussion of these relies heavily on facts established in work of Sagan and Savage \cite{Sagan-Savage} and in Amdeberhan, Chen, Moll, and Sagan \cite{ACMS}. In each case the combinatorial model for the gamma vector is a set of matchings counted according to the number of edges.

\subsection{Fibonacci polynomials and the $q$-factorials}
\label{sec:q-analogue-n}

Let 
\[
F_n(s,t) = \sum_{T \in 1\times n} s^{ m(T)}t^{d(T)},
\]
where the sum is over all monomer-dimer tilings of a $1\times n$ rectangle, or equivalently, the number of (not necessarily perfect) matchings of a path of length $n-1$. The statistic $m(T)$ is the number of monomers (isolated nodes) in $T$ and $d(T)$ is the number of dimers (edges) in $T$. The \emph{weight} of a matching is $w(T)=s^{m(T)}t^{d(T)}$. 

For example with $n=6$ there are thirteen matchings, shown in Figure \ref{fig:matching}. We find the Fibonacci polynomial $F_6(s,t)$ is
\[
 F_6(s,t) = s^6 + 5s^4t + 6s^2t^2 + t^3.
\]

\begin{figure}
\[
\begin{array}{c | c}
\mbox{matching} & \mbox{weight} \\
\hline
& \\
\begin{tikzpicture}
\foreach \x in {0,...,5}{
\draw (\x,0) node[circle, inner sep=2, fill=black] {};
}
\draw (0,0) -- (1,0);
\draw (2,0) -- (3,0);
\draw (4,0) -- (5,0);
\end{tikzpicture}
&
t^3 \\
& \\
\begin{tikzpicture}
\foreach \x in {0,...,5}{
\draw (\x,0) node[circle, inner sep=2, fill=black] {};
}
\draw (2,0) -- (3,0);
\draw (4,0) -- (5,0);
\end{tikzpicture}
&
s^2t^2 \\
& \\
\begin{tikzpicture}
\foreach \x in {0,...,5}{
\draw (\x,0) node[circle, inner sep=2, fill=black] {};
}
\draw (1,0) -- (2,0);
\draw (4,0) -- (5,0);
\end{tikzpicture}
&
s^2t^2 \\
& \\
\begin{tikzpicture}
\foreach \x in {0,...,5}{
\draw (\x,0) node[circle, inner sep=2, fill=black] {};
}
\draw (0,0) -- (1,0);
\draw (4,0) -- (5,0);
\end{tikzpicture}
&
s^2t^2 \\
& \\
\begin{tikzpicture}
\foreach \x in {0,...,5}{
\draw (\x,0) node[circle, inner sep=2, fill=black] {};
}
\draw (4,0) -- (5,0);
\end{tikzpicture}
&
s^4t \\
& \\
\begin{tikzpicture}
\foreach \x in {0,...,5}{
\draw (\x,0) node[circle, inner sep=2, fill=black] {};
}
\draw (1,0) -- (2,0);
\draw (3,0) -- (4,0);
\end{tikzpicture}
&
s^2t^2 \\
& \\
\begin{tikzpicture}
\foreach \x in {0,...,5}{
\draw (\x,0) node[circle, inner sep=2, fill=black] {};
}
\draw (0,0) -- (1,0);
\draw (3,0) -- (4,0);
\end{tikzpicture}
&
s^2t^2 \\
& \\
\begin{tikzpicture}
\foreach \x in {0,...,5}{
\draw (\x,0) node[circle, inner sep=2, fill=black] {};
}
\draw (3,0) -- (4,0);
\end{tikzpicture}
&
s^4t \\
& \\
\begin{tikzpicture}
\foreach \x in {0,...,5}{
\draw (\x,0) node[circle, inner sep=2, fill=black] {};
}
\draw (0,0) -- (1,0);
\draw (2,0) -- (3,0);
\end{tikzpicture}
&
s^2t^2 \\
& \\
\begin{tikzpicture}
\foreach \x in {0,...,5}{
\draw (\x,0) node[circle, inner sep=2, fill=black] {};
}
\draw (2,0) -- (3,0);
\end{tikzpicture}
&
s^4t \\
& \\
\begin{tikzpicture}
\foreach \x in {0,...,5}{
\draw (\x,0) node[circle, inner sep=2, fill=black] {};
}
\draw (1,0) -- (2,0);
\end{tikzpicture}
&
s^4t \\
& \\
\begin{tikzpicture}
\foreach \x in {0,...,5}{
\draw (\x,0) node[circle, inner sep=2, fill=black] {};
}
\draw (0,0) -- (1,0);
\end{tikzpicture}
&
s^4t \\
& \\
\begin{tikzpicture}
\foreach \x in {0,...,5}{
\draw (\x,0) node[circle, inner sep=2, fill=black] {};
}
\end{tikzpicture}
&
s^6 \\
& \\
\end{array}
\]
\caption{The thirteen Fibonacci matchings for $n=6$, with corresponding weight.}\label{fig:matching}
\end{figure}

There are many formulas and recurrences for the Fibonacci polynomials that generalize known facts for Fibonacci numbers. Such results for these polynomials can be found, e.g., in \cite{ACMS, Cigler, Sagan-Savage}. Here we mention two straightforward observations.

First, since every matching must end in either an edge or an isolated node, we get
\begin{equation}\label{eq:fib-poly-recur}
 F_n(s,t) = sF_{n-1}(s,t) + tF_{n-2}(s,t),
\end{equation}
with $F_0(s,t)=1$ and  $F_1(s,t) = s$. This generalizes the numeric recurrence for Fibonacci numbers.

Further, the familiar identity for Fibonacci numbers as a sum of binomial coefficients, $F_n = \sum_{k \geq 0} \binom{n-k}{k}$, is generalized to
\begin{equation}\label{eq:binomexpansion}
 F_n(s,t) = \sum_{k\geq 0} \binom{n-k}{k} s^{n-2k}t^k.
\end{equation}
For example, 
\[
F_6(s,t) = \binom{6}{0}s^6 + \binom{5}{1}s^4t + \binom{4}{2}s^2t^2 + \binom{3}{3}t^3.
\]
This identity has a straightforward combinatorial interpretation: if a matching has $k$ edges (and hence $n-2k$ isolated nodes), then there are $(n-2k)+k = n-k$ items to be positioned, and we have to choose $k$ of the positions in which to place the edges. For convenience later on, we will denote by $T(n,k)$ the set of all matchings of the $1\times n$ path graph with $k$ edges, so that
\[
 |T(n,k)| = \binom{n-k}{k}.
\]

An important specialization of $F_n(s,t)$ is obtained by setting $s=1+q$ and $t=-q$. By induction and the recurrence in \eqref{eq:fib-poly-recur} we quickly get
\begin{equation}\label{eq:Ftoq}
 F_n(1+q,-q) = 1+q + q^2 + \cdots + q^n, 
\end{equation}
which is denoted by $[n+1]_q$ (or simply by $[n+1]$ when the $q$ is understood from the context), and called the \emph{$q$-analogue} of $n+1$. 


Note that $[n]$ has palindromic degree $n-1$, and hence it has a $\gamma$-vector as defined in Section \ref{sec:bases}. With Equation \eqref{eq:Ftoq} together with \eqref{eq:binomexpansion} we get 
\[
[n] = F_{n-1}(1+q,-q)= \sum_{k\geq 0} (-1)^k\binom{n-1-k}{k} q^k (1+q)^{n-1-2k},
\]
from which we conclude that the $\gamma$-vector of $[n]$ has 
\[
 \gamma_j = (-1)^j\binom{n-1-j}{j} = (-1)^j|T(n-1,j)|,
\]
i.e., up to sign it counts matchings on a path with $n-1$ nodes that have $j$ edges.

To state this another way, $F_{n-1}(1,-z)$ is the $\gamma$-polynomial for $[n]$. 

Now recall the $q$-analogue of $n!$ is defined to be
\[
[n]!=[n][n-1]\cdots [2][1]. 
\]
Since $\gamma$-polynomials are multiplicative (Observation \ref{obs:gamma-poly-factors}), we have established the following proposition.

\begin{prop}\label{prop:gam-poly-q-fact}
The $\gamma$-polynomial for $[n]$ is $F_{n-1}(1,-z)$ for any $n\geq 1$. Hence, the $\gamma$-polynomial for $[n]!$ is 
\[ \prod_{i=0}^{n-1} F_i(1,-z) = F_0(1,-z) F_1(1,-z) \cdots F_{n-1}(1,-z).\]
\end{prop}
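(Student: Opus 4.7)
The plan is to chain together the three ingredients the paper has already assembled: the specialization formula \eqref{eq:Ftoq}, the binomial expansion \eqref{eq:binomexpansion}, and the multiplicativity of $\gamma$-polynomials (Observation \ref{obs:gamma-poly-factors}).

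First I would handle the claim for $[n]$. Substituting $s = 1+q$ and $t = -q$ into the binomial expansion \eqref{eq:binomexpansion} applied with $n$ replaced by $n-1$, I get
\[
[n] \;=\; F_{n-1}(1+q,-q) \;=\; \sum_{k\geq 0} (-1)^k \binom{n-1-k}{k} q^k (1+q)^{n-1-2k}.
\]
Comparing this with the definition of the $\gamma$-vector from Section \ref{sec:bases} (noting that $[n]$ is palindromic of degree $d=n-1$, so the basis elements $q^k(1+q)^{d-2k}$ are exactly those appearing on the right), the uniqueness of the $\Gamma_d$-expansion identifies the coefficients as $\gamma_k = (-1)^k\binom{n-1-k}{k}$. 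Reading this as a polynomial in $z$,
\[
\gamma_{[n]}(z) \;=\; \sum_{k\geq 0} (-1)^k \binom{n-1-k}{k} z^k \;=\; \sum_{k\geq 0} \binom{n-1-k}{k} 1^{\,n-1-2k}(-z)^k \;=\; F_{n-1}(1,-z),
\]
where the last equality is another application of \eqref{eq:binomexpansion}, this time with $s=1$ and $t=-z$.

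For the second assertion, I would simply invoke multiplicativity. Since $[n]! = [n][n-1]\cdots [2][1]$ is a product of palindromic polynomials, Observation \ref{obs:gamma-poly-factors} iterated $n-1$ times gives
\[
\gamma_{[n]!}(z) \;=\; \prod_{i=1}^{n} \gamma_{[i]}(z) \;=\; \prod_{i=1}^{n} F_{i-1}(1,-z) \;=\; \prod_{i=0}^{n-1} F_i(1,-z),
\]
after re-indexing. There is no real obstacle here; the proposition is essentially a bookkeeping consequence of the machinery already set up. The only point that might merit a sentence of justification is the implicit uniqueness of coefficients in the basis $\Gamma_{n-1}$, which follows from the invertibility of the change-of-basis matrix $B_d$ recorded earlier.
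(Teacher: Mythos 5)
Your proposal is correct and follows essentially the same route as the paper: specialize the expansion \eqref{eq:binomexpansion} at $s=1+q$, $t=-q$ to read off the $\gamma$-coefficients of $[n]$ via \eqref{eq:Ftoq}, then apply the multiplicativity of $\gamma$-polynomials (Observation \ref{obs:gamma-poly-factors}) to $[n]! = [n][n-1]\cdots[1]$. Your added remark about uniqueness of the $\Gamma_{n-1}$-expansion is a reasonable explicit touch but not a departure from the paper's argument.
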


Thus we can give the following combinatorial interpretation to the $\gamma$-vector for $[n]!$.

\begin{cor}
  If $(\gamma_0, \ldots, \gamma_{\lfloor d/2\rfloor})$ is the
  $\gamma$-vector of $[n]!$, then 
  \[
  \gamma_i=(-1)^i \left| \bigcup_{j_1+j_2+\cdots+j_{n-1}=i} T(1,j_1)\times T(2,j_2)\times \cdots \times T(n-1,j_{n-1}) \right|.
  \]  
\end{cor}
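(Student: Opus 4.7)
The plan is to derive this corollary directly from Proposition~\ref{prop:gam-poly-q-fact} combined with the combinatorial reading of the coefficients of $F_k(1,-z)$ already established above. From Equation~\eqref{eq:binomexpansion} with the substitution $s=1$, $t=-z$, one has
\[
F_k(1,-z) = \sum_{j \geq 0} (-1)^j \binom{k-j}{j} z^j = \sum_{j \geq 0} (-1)^j |T(k,j)| z^j,
\]
so each coefficient of $F_k(1,-z)$ is, up to the sign $(-1)^j$, the number of matchings of the $1\times k$ path with $j$ edges.

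Next I would invoke Proposition~\ref{prop:gam-poly-q-fact} to write the $\gamma$-polynomial of $[n]!$ as $\prod_{k=0}^{n-1} F_k(1,-z)$. Since $F_0(1,-z)=1$, only the factors with $k=1, \ldots, n-1$ contribute nontrivially. Expanding the product via the distributive law and collecting the coefficient of $z^i$, every contribution to $z^i$ is indexed by a tuple $(j_1, j_2, \ldots, j_{n-1})$ of nonnegative integers with $j_1 + j_2 + \cdots + j_{n-1} = i$, and its value is $\prod_{k=1}^{n-1} (-1)^{j_k} |T(k, j_k)|$. Thus
\[
\gamma_i = \sum_{j_1 + \cdots + j_{n-1} = i} \prod_{k=1}^{n-1} (-1)^{j_k} |T(k, j_k)|.
\]

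Finally, because $\sum_k j_k = i$, the sign factor collapses to $\prod_k (-1)^{j_k} = (-1)^i$ independent of the composition, and I would pull it outside the sum. The remaining product of cardinalities equals $|T(1,j_1) \times T(2,j_2) \times \cdots \times T(n-1, j_{n-1})|$, and since a tuple of matchings $(T_1, \ldots, T_{n-1})$ records its own edge-count vector $(j_1, \ldots, j_{n-1})$, distinct compositions index disjoint Cartesian products. Summing their sizes therefore gives the cardinality of the union appearing in the statement, which completes the derivation. There is no genuine obstacle here: the proof is a bookkeeping exercise combining the multiplicativity of $\gamma$-polynomials with the combinatorial meaning of the binomial coefficients; the only care needed is in confirming the disjointness that lets the sum of cardinalities be read as a single cardinality.
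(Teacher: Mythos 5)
Your proposal is correct and follows essentially the same route as the paper: expand $\prod_{k=0}^{n-1} F_k(1,-z)$ using the coefficient interpretation $(-1)^j|T(k,j)|$ from Equation~\eqref{eq:binomexpansion}, collect the coefficient of $z^i$ over compositions, and note the sign collapses to $(-1)^i$ while the disjoint products assemble into the stated union. Nothing is missing.
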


In other words, up to sign, the $i$th entry of the $\gamma$-vector for $[n]!$ counts $(n-1)$-tuples of matchings (of paths with $1,2,\ldots,n-1$ nodes) such that there are a total of $i$ edges among all the matchings. We will call such tuples of matchings \emph{fibotorial matchings}. Denote the set of all fibotorial matchings with $i$ edges by 
\[
\mt(n,i) =\bigcup_{j_1+j_2+\cdots+j_{n-1}=i} T(1,j_1)\times T(2,j_2)\times \cdots \times T(n-1,j_{n-1}),
\]
and let $\mt_n$ denote the set of all fibotorial matchings for fixed $n$:
\[
 \mt_n = \bigcup_{i\geq 0} \mt(n,i).
\]

Setting $z=-1$ in Proposition \ref{prop:gam-poly-q-fact}, we get the following fun corollary.

\begin{cor}
 Let $\gamma_{[n]!}(z)$ denote the $\gamma$-polynomial of the $q$-analogue of $n!$. Then $\gamma_{[n]!}(-1) = |\mt_n| = F_0F_1F_2\dotsc F_{n-1}$ is the
  product of the first $n$ Fibonacci numbers, with initial values
  $F_0=F_1=1$. 
\end{cor}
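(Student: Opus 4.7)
The plan is to deduce both equalities directly by specializing results already proved. The key inputs are Proposition \ref{prop:gam-poly-q-fact}, the immediately preceding corollary identifying $\gamma_i$ up to sign with $|\mt(n,i)|$, and the observation that $F_i(1,1)$ is the ordinary $i$th Fibonacci number.

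First I would verify the left-hand equality $\gamma_{[n]!}(-1) = |\mt_n|$. The preceding corollary asserts that $\gamma_i = (-1)^i |\mt(n,i)|$ for the $\gamma$-vector of $[n]!$, so
\[
\gamma_{[n]!}(-1) = \sum_{i \geq 0} \gamma_i (-1)^i = \sum_{i \geq 0} (-1)^i |\mt(n,i)| \cdot (-1)^i = \sum_{i \geq 0} |\mt(n,i)| = |\mt_n|,
\]
where the signs $(-1)^i$ cancel pairwise and $\mt_n$ decomposes as the disjoint union $\bigcup_i \mt(n,i)$ by definition.

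For the right-hand equality $\gamma_{[n]!}(-1) = F_0 F_1 \cdots F_{n-1}$, I would substitute $z=-1$ into the factorization from Proposition \ref{prop:gam-poly-q-fact} to obtain $\gamma_{[n]!}(-1) = \prod_{i=0}^{n-1} F_i(1,1)$, and then identify $F_i(1,1)$ with the $i$th Fibonacci number. The recurrence \eqref{eq:fib-poly-recur} specialized to $s=t=1$ reads $F_i(1,1) = F_{i-1}(1,1) + F_{i-2}(1,1)$, and matching the initial conditions $F_0(1,1) = 1$ and $F_1(1,1) = 1$ with the convention $F_0 = F_1 = 1$ gives $F_i(1,1) = F_i$ by induction.

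No step presents a genuine obstacle, since everything follows immediately from the definitions once Proposition \ref{prop:gam-poly-q-fact} is in hand; the only care needed is tracking the two different sources of signs $(-1)^i$. An appealing feature worth highlighting is that the second equality also admits a purely bijective reading bypassing the $\gamma$-polynomial entirely: by definition $\mt_n$ is the set of $(n-1)$-tuples of matchings on paths with $1, 2, \ldots, n-1$ nodes, so the total count factors as $\prod_{m=1}^{n-1} F_m(1,1)$ with each factor equal to the Fibonacci number $F_m$ by \eqref{eq:binomexpansion}. Since $F_0 = 1$, this product coincides with $F_0 F_1 \cdots F_{n-1}$.
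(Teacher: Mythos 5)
Your proposal is correct and matches the paper's argument, which simply sets $z=-1$ in Proposition \ref{prop:gam-poly-q-fact} so that $\gamma_{[n]!}(-1)=\prod_{i=0}^{n-1}F_i(1,1)$ becomes the product of Fibonacci numbers, with the $|\mt_n|$ identification coming from the sign cancellation in the preceding corollary exactly as you describe. Your closing bijective remark is a nice addition but does not change the substance.
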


This curious result was proved differently by Doron Zeilberger. Our approach is essentially Richard Stanley's, and Johann Cigler has a similar argument. See Zeilberger's note \cite{Z2}.

\subsection{Lucas polynomials and partitions with distinct parts}
\label{sec:polynomials-1+qn}

The Lucas polynomials are defined analogously to the Fibonacci polynomials. We have \[
L_n(s,t) = \sum_{T \in \cyc_n} s^{m(T)} t^{d(T)},
\]
where the sum is over all matchings on a cycle graph with $n$ nodes and edges. For example, in Figure \ref{fig:lucas} we see the seven matchings of a $4$-cycle, from which we find $L_n(s,t) = s^4+4s^2t+2t^2$. 

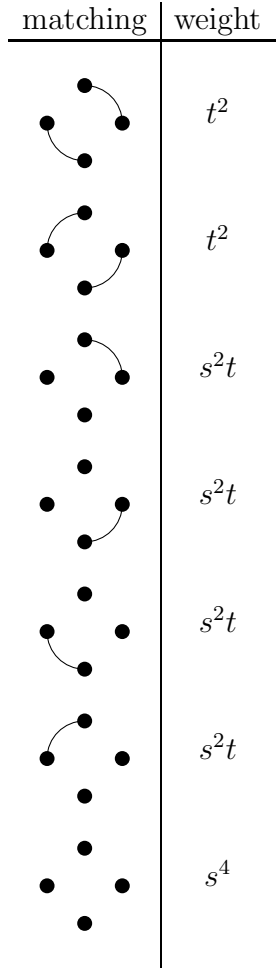
\begin{figure}
\[
\begin{array}{c | c}
\mbox{matching} & \mbox{weight} \\
\hline
& \\
\begin{tikzpicture}[scale=.5,baseline=0]
\draw (1,0) node[circle, inner sep=2, fill=black] {};
\draw (-1,0) node[circle, inner sep=2, fill=black] {};
\draw (0,1) node[circle, inner sep=2, fill=black] {};
\draw (0,-1) node[circle, inner sep=2, fill=black] {};
\draw (1,0) arc (0:90:1);
\draw (-1,0) arc (180:270:1);
\end{tikzpicture}
&
t^2 \\
& \\
\begin{tikzpicture}[scale=.5,baseline=0]
\draw (1,0) node[circle, inner sep=2, fill=black] {};
\draw (-1,0) node[circle, inner sep=2, fill=black] {};
\draw (0,1) node[circle, inner sep=2, fill=black] {};
\draw (0,-1) node[circle, inner sep=2, fill=black] {};
\draw (1,0) arc (0:-90:1);
\draw (-1,0) arc (180:90:1);
\end{tikzpicture}
&
t^2 \\
& \\
\begin{tikzpicture}[scale=.5,baseline=0]
\draw (1,0) node[circle, inner sep=2, fill=black] {};
\draw (-1,0) node[circle, inner sep=2, fill=black] {};
\draw (0,1) node[circle, inner sep=2, fill=black] {};
\draw (0,-1) node[circle, inner sep=2, fill=black] {};
\draw (1,0) arc (0:90:1);
\end{tikzpicture}
&
s^2t \\
& \\
\begin{tikzpicture}[scale=.5,baseline=0]
\draw (1,0) node[circle, inner sep=2, fill=black] {};
\draw (-1,0) node[circle, inner sep=2, fill=black] {};
\draw (0,1) node[circle, inner sep=2, fill=black] {};
\draw (0,-1) node[circle, inner sep=2, fill=black] {};
\draw (1,0) arc (0:-90:1);
\end{tikzpicture}
&
s^2t \\
& \\
\begin{tikzpicture}[scale=.5,baseline=0]
\draw (1,0) node[circle, inner sep=2, fill=black] {};
\draw (-1,0) node[circle, inner sep=2, fill=black] {};
\draw (0,1) node[circle, inner sep=2, fill=black] {};
\draw (0,-1) node[circle, inner sep=2, fill=black] {};
\draw (-1,0) arc (180:270:1);
\end{tikzpicture}
&
s^2t \\
& \\
\begin{tikzpicture}[scale=.5,baseline=0]
\draw (1,0) node[circle, inner sep=2, fill=black] {};
\draw (-1,0) node[circle, inner sep=2, fill=black] {};
\draw (0,1) node[circle, inner sep=2, fill=black] {};
\draw (0,-1) node[circle, inner sep=2, fill=black] {};
\draw (-1,0) arc (180:90:1);
\end{tikzpicture}
&
s^2t \\
& \\
\begin{tikzpicture}[scale=.5,baseline=0]
\draw (1,0) node[circle, inner sep=2, fill=black] {};
\draw (-1,0) node[circle, inner sep=2, fill=black] {};
\draw (0,1) node[circle, inner sep=2, fill=black] {};
\draw (0,-1) node[circle, inner sep=2, fill=black] {};
\end{tikzpicture}
&
s^4 \\
& \\
\end{array}
\]
\caption{The seven Lucas matchings for $n=4$, with corresponding weight.}\label{fig:lucas}
\end{figure}

The Lucas polynomials satisfy the same recurrence as the Fibonacci polynomials, i.e.,
\begin{equation}\label{eq:Lpoly}
 L_n(s,t) = sL_{n-1}(s,t) + tL_{n-2}(s,t),
\end{equation}
but with initial conditions $L_0(s,t) = 2$ and $L_1(s,t) = s$. In terms of the tilings, this recurrence is easiest to understand by first relating $L_n(s,t)$ with $F_n(s,t)$. We have
\begin{equation}\label{eq:LF}
 L_n(s,t) = sF_{n-1}(s,t) + 2tF_{n-2}(s,t).
\end{equation}
This identity follows by considering the neighborhood of a fixed node. This node is either isolated, or it is connected to one of the nodes on either side of it. If the node is isolated, the remaining $n-1$ nodes form a linear matching whose weight is counted by $F_{n-1}(s,t)$. If the node is matched to an adjacent node, the remaining $n-2$ nodes form a linear matching, with weight $F_{n-2}(s,t)$.

Equation \eqref{eq:Lpoly} follows by using \eqref{eq:LF} and then applying the Fibonacci recurrence \eqref{eq:fib-poly-recur} as follows: 
\begin{align*}
sL_n(s,t) + tL_{n-1}(s,t) &= s^2F_{n-1}(s,t) + 2stF_{n-2}(s,t) + stF_{n-2}(s,t) + 2t^2F_{n-3}(s,t),\\
 &= s(sF_{n-1}(s,t) + tF_{n-2}(s,t)) + 2t(sF_{n-2}(s,t) + tF_{n-3}(s,t)),\\
 &= sF_n(s,t) + 2tF_{n-1}(s,t),\\
 &= L_{n+1}(s,t).
\end{align*}

By induction we also have the following explicit formula for Lucas polynomials:
\begin{equation}\label{eq:Lexplicit}
 L_n(s,t) = \sum_{k\geq 0} \frac{n}{n-k}\binom{n-k}{k} s^{n-2k}t^k.
\end{equation}
Letting $T'(n,k)$ denote the number of matchings of an $n$-cycle with $k$ edges, we have
\[
 |T'(n,k)|=\frac{n}{n-k}\binom{n-k}{k}.
\]

Finally, note that we have the following specialization of the Lucas polynomials. If we set $s=1+q$ and $t=-q$, then using induction and \eqref{eq:Lpoly} we get
\begin{equation}\label{eq:1+qn}
L_n(1+q,-q) = 1+q^n.
\end{equation}

Since $1+q^n$ has palindromic degree $n$, it has a $\gamma$-vector. Putting \eqref{eq:Lexplicit} together with \eqref{eq:1+qn} we have
\[
 1+q^n = L_n(1+q,-q) = \sum_{k\geq 0} (-1)^k\frac{n}{n-k}\binom{n-k}{k} q^k (1+q)^{n-2k},
\]
from which we conclude that the $\gamma$-vector of $1+q^n$ has
\[
 \gamma_j = (-1)^j\frac{n}{n-j}\binom{n-j}{j} = (-1)^j|T'(n,j)|,
\]
so that up to sign, the $\gamma$-vector counts matchings on an $n$-cycle with $j$ edges.

Stated differently, $L_n(1,-z)$ is the $\gamma$-polynomial for $1+q^n$.

Now fix $n$ and consider the polynomial
\[
 \prod_{j=1}^n (1+q^j).
\]
The coefficient of $q^i$ in this polynomial is equal to the number of integer partitions of $i$ with at most $n$ nonzero parts, all distinct, and each bounded by $n$. This polynomial is discussed at length in \cite{Stanleyunimodal}. While it is known to be unimodal, to date there is no combinatorial proof of its unimodality. See also \cite{StanleyZanello}.

By the multiplicative nature of $\gamma$-polynomials, we get the following result, which is similar to Proposition \ref{prop:gam-poly-q-fact}.

\begin{prop}\label{prop:gam-partn}
The $\gamma$-polynomial for $1+q^n$ is $L_n(1,-z)$ for any $n\geq 1$. Hence, the $\gamma$-polynomial for $\prod_{j=1}^n (1+q^j)$ is
\[
 \prod_{i=j}^n L_j(1,-z) = L_1(1,-z) L_2(1,-z)\cdots L_n(1,-z).
\]
\end{prop}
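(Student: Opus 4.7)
The proof is essentially a direct consequence of calculations already carried out in the preceding discussion together with Observation \ref{obs:gamma-poly-factors}; I would just need to package these together cleanly.

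First I would verify that $1+q^n$ is palindromic of degree $n$, so that it has a well-defined $\gamma$-polynomial in the sense of Section \ref{sec:bases}. Then I would substitute the explicit formula \eqref{eq:Lexplicit} for $L_n(s,t)$ into the specialization \eqref{eq:1+qn} to obtain
\[
 1+q^n \;=\; L_n(1+q,-q) \;=\; \sum_{k\geq 0} (-1)^k \frac{n}{n-k}\binom{n-k}{k}\, q^k(1+q)^{n-2k}.
\]
Since the right-hand side is an expansion in the basis $\Gamma_n = \{q^k(1+q)^{n-2k}\}$, I can read off the $\gamma$-vector directly: $\gamma_k = (-1)^k\frac{n}{n-k}\binom{n-k}{k}$. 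Assembling these into a polynomial gives $\gamma_{1+q^n}(z) = \sum_k \gamma_k z^k = \sum_k \frac{n}{n-k}\binom{n-k}{k}(-z)^k$, which is exactly $L_n(1,-z)$ by \eqref{eq:Lexplicit} with $s=1$, $t=-z$. This establishes the first sentence of the proposition.

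For the second sentence, I would apply Observation \ref{obs:gamma-poly-factors}, which says that the $\gamma$-polynomial of a product of palindromic polynomials is the product of their $\gamma$-polynomials. Since each factor $1+q^j$ is palindromic, iterating (or a trivial induction on $n$) yields
\[
 \gamma_{\prod_{j=1}^n (1+q^j)}(z) \;=\; \prod_{j=1}^n \gamma_{1+q^j}(z) \;=\; \prod_{j=1}^n L_j(1,-z),
\]
which is the desired formula.

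There is no real obstacle here: the two ingredients (the explicit expansion of $L_n(s,t)$ and the multiplicativity of $\gamma$-polynomials) have both been developed in the preceding section, and the proposition is the direct synthesis of them. The only thing to be careful about is matching indices and signs between the expansion of $L_n(1+q,-q)$ as an element of $\Gamma_n$ and the evaluation $L_n(1,-z)$, but this amounts to tracking one substitution $(s,t) \mapsto (1,-z)$ versus $(s,t) \mapsto (1+q,-q)$ in the same explicit formula.
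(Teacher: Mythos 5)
Your proposal is correct and follows exactly the paper's argument: the paper obtains the $\gamma$-vector of $1+q^n$ by combining the specialization \eqref{eq:1+qn} with the explicit expansion \eqref{eq:Lexplicit}, reads off $\gamma_j=(-1)^j\frac{n}{n-j}\binom{n-j}{j}$ so that the $\gamma$-polynomial is $L_n(1,-z)$, and then invokes the multiplicativity of $\gamma$-polynomials (Observation \ref{obs:gamma-poly-factors}) for the product. No gaps; this is the same synthesis the paper presents.
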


Therefore we have the following combinatorial interpretation for the $\gamma$-vector of $\prod_{j=1}^n (1+q^j)$.

\begin{cor}
  If $(\gamma_0, \ldots, \gamma_{\lfloor d/2\rfloor})$ is the
  $\gamma$-vector of $\prod_{j=1}^n (1+q^j)$, then 
  \[
  \gamma_i=(-1)^i \left| \bigcup_{j_1+j_2+\cdots+j_n=i} T'(1,j_1)\times T'(2,j_2)\times \cdots \times T'(n,j_n) \right|.
  \]  
\end{cor}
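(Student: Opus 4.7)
The plan is to mimic, almost verbatim, the argument just given for $[n]!$, replacing Fibonacci polynomials with Lucas polynomials. By Proposition \ref{prop:gam-partn}, the $\gamma$-polynomial of $\prod_{j=1}^n(1+q^j)$ equals
\[
\gamma(z) = \prod_{j=1}^n L_j(1,-z),
\]
so $\gamma_i$ is just the coefficient of $z^i$ in this product.

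Next I would substitute the explicit formula \eqref{eq:Lexplicit} specialized at $(s,t)=(1,-z)$: for each $j\geq 1$,
\[
L_j(1,-z) = \sum_{k\geq 0} (-1)^k \frac{j}{j-k}\binom{j-k}{k} z^k = \sum_{k\geq 0} (-1)^k |T'(j,k)|\, z^k.
\]
Multiplying the $n$ factors together and extracting the coefficient of $z^i$ produces
\[
\gamma_i = \sum_{j_1+j_2+\cdots+j_n = i} (-1)^{j_1+j_2+\cdots+j_n}\, |T'(1,j_1)|\cdot |T'(2,j_2)|\cdots |T'(n,j_n)|.
\]
Since the exponents add to $i$, the sign factors out as $(-1)^i$, leaving a sum of products of cardinalities.

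Finally, I would observe that a product of cardinalities is the cardinality of a Cartesian product, and that distinct compositions $(j_1,\ldots,j_n)$ index disjoint Cartesian products of tuples of matchings; hence the sum of cardinalities is the cardinality of the (disjoint) union appearing in the statement. Combining these steps yields
\[
\gamma_i = (-1)^i \left| \bigcup_{j_1+j_2+\cdots+j_n = i} T'(1,j_1)\times T'(2,j_2) \times \cdots \times T'(n,j_n) \right|,
\]
as claimed. There is no real obstacle here: this is essentially a bookkeeping corollary of Proposition \ref{prop:gam-partn} and the explicit Lucas expansion \eqref{eq:Lexplicit}, in complete analogy with the fibotorial corollary in Section \ref{sec:q-analogue-n}.
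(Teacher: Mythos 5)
Your proposal is correct and is essentially the paper's own argument: the corollary is read off directly from Proposition \ref{prop:gam-partn} by expanding each factor via \eqref{eq:Lexplicit} at $(s,t)=(1,-z)$, extracting the coefficient of $z^i$, and identifying the resulting sum of products $|T'(1,j_1)|\cdots|T'(n,j_n)|$ over compositions $j_1+\cdots+j_n=i$ with the cardinality of the disjoint union of Cartesian products.
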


In other words, up to sign, the $i$th entry of the $\gamma$-vector for $\prod_{j=1}^n (1+q^j)$ counts $n$-tuples of matchings (of cycles with $1,2,\ldots,n$ nodes) such that there are a total of $i$ edges among all the matchings. We will call such a tuple of matchings a \emph{lucatorial matching}. Denote the set of lucatorial matchings with $i$ edges by
\[
 \mt'(n,i) = \bigcup_{j_1+j_2+\cdots+j_n=i} T'(1,j_1)\times T'(2,j_2)\times \cdots \times T'(n,j_n),
\] 
and denote the set of all lucatorial matchings with fixed $n$ by
\[
 \mt'_n = \bigcup_{i\geq 0} \mt'(n,i).
\]

\subsection{Lucanomial coefficients and $q$-binomial coefficients}
\label{sec:q-binom-coeff}

The ``fibotorial" numbers are obtained by replacing $n!$ with the product of the first $n$ Fibonacci numbers, and the ``fibinomial" coefficients are obtained by taking the formula for binomial coefficients and replacing the usual factorials with fibotorials. The polynomial analogue of this is what Sagan and Savage refer to as \emph{lucanomials}, which are defined as follows. For $n>1$, let
\[
\{ n\}! = \prod_{i=0}^{n-1} F_i(s,t),
\]
and set $\{0\}!=1$ by definition. Then with $0\leq k \leq n$, define  
\[
 \lbinom{n}{k} = \frac{\{n\}!}{\{k\}!\{n-k\}!}.
\]
For example, one can check that 
\[
 \lbinom{5}{3} = \frac{F_4(s,t)F_3(s,t)}{F_1(s,t)F_0(s,t)} = s^6 + 5s^4t + 7s^2t^2+2t^3.
\]

The fact that $\lbinom{n}{k}$ is a polynomial in $s$ and $t$ follows by induction from the recurrence
\[
 \lbinom{m+n}{m} = F_n(s,t)\lbinom{m+n-1}{m-1} + tF_{m-2}(s,t)\lbinom{m+n-1}{n-1}.
\]
The Sagan-Savage combinatorial model for the lucanomial coefficients takes a bit of explanation (this is the main content of their paper).

Before we give their combinatorial interpretation, recall an integer partition with $m$ parts is a sequence of nonnegative integers $\lambda=(\lambda_1,\lambda_2, \ldots,\lambda_m)$ with $\lambda_1\geq \lambda_2 \geq\dotsc \geq \lambda_m \geq 0$. We think of partitions visually in terms of an array of left-justified boxes with $\lambda_1$ boxes in the first row, $\lambda_2$ boxes in the second row, and so on, known as a \emph{Ferrers diagram}. We say that $\lambda$ is contained in an $m \times n$ rectangle if the largest part of $\lambda$ has size at most $n$, i.e., $\lambda_1 \leq n$. The \emph{complement} of $\lambda$ within the $m\times n$ rectangle is the partition $\lambda^*= (\lambda^*_1,\lambda_2^*,\dotsc,\lambda^*_n)$ where
$\lambda^*_i = m-\#\{j \mid \lambda_j  \geq n+1-i\}$.

\begin{figure}
\[
 \begin{tikzpicture}
  \draw (0,0) grid (4,5);
  \draw[line width=4,rounded corners, cap=round] (0,0)--(0,2)--(2,2)--(2,3)--(3,3)--(3,5)--(4,5);
 \end{tikzpicture}
 \hspace{2cm}
 \begin{tikzpicture}
  \draw (0,0) grid (4,5);
  \draw[line width=4,rounded corners, cap=round] (0,0)--(0,2)--(2,2)--(2,3)--(3,3)--(3,5)--(4,5);
  \foreach \x in {1,...,4}{
   \foreach \y in {1,...,5}{
    \draw (\x-.5,\y-.5) node[circle, inner sep=2, fill=black] {};
   }
  }
  \draw (.5,4.5)--(1.5,4.5);
  \draw (1.5,3.5)--(2.5,3.5);
  \draw (.5,.5)--(.5,1.5);
  \draw (1.5,.5)--(1.5,1.5);
  \draw (2.5,.5)--(2.5,1.5);
  \draw (3.5,.5)--(3.5,1.5);
  \draw (3.5,3.5)--(3.5,4.5);
 \end{tikzpicture}
\]
\caption{The partition $\lambda = (3,3,2,0,0)$ inside a $5 \times 4$ rectangle and a matching of weight $s^6t^7$.}\label{fig:complement}
\end{figure}
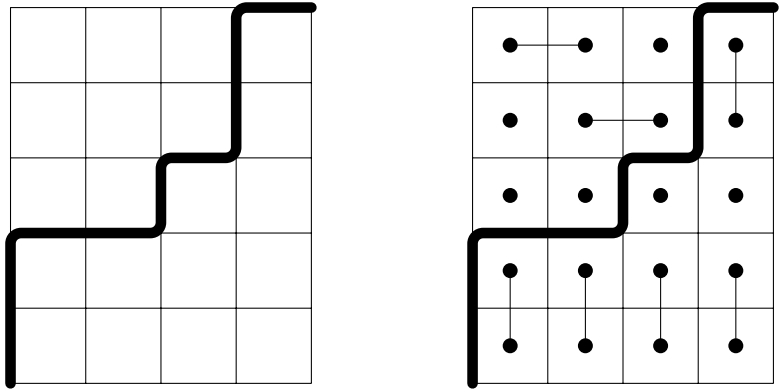

For example, in Figure \ref{fig:complement} we see the Ferrers diagram for the partition $\lambda=(3,3,2,0,0)$ in a $5 \times 4$ rectangle. Its complement is $\lambda^* = (5,3,2,2)$.

Now, for a given partition $\lambda = (\lambda_1,\ldots,\lambda_m)$, we let $\ml_{\lambda}$ denote the set of all tuples of matchings $T=(T_1,\ldots,T_m)$, where $T_i$ is a matching on the path graph $1\times \lambda_i$. We draw these matchings in the rows of the Ferrers diagram for $\lambda$. Let $\ml'_{\lambda}$ denote the set of all tuples of \emph{strict} matchings on the rows of $\lambda$, where by strict we mean the matching cannot begin with an isolated node. Notice there are no strict matchings of the path with one node and zero edges. 

For a tuple of matchings, $T$, we let $w(T)$ denote the weight $s^{m(T)}t^{d(T)}$, where $m(T)$ denotes the number of isolated nodes (monomers) in $T$ and $d(T)$ denotes the number of edges (dimers). In Figure \ref{fig:complement} we see an element $T$ of $\ml_{\lambda}\times\ml_{\lambda^*}'$, where we draw the strict matchings of $\lambda^*$ up the columns the complement of the Ferrers diagram of $\lambda$. For this example there are $6$ isolated nodes and $7$ edges, giving a weight of $w(T)=s^6t^7$.

One of the main results of Sagan and Savage's paper, \cite[Theorem 3]{Sagan-Savage}, is the fact that lucanomial coefficients count all such tilings according to weight. That is,
\begin{equation}\label{eq:lucweight}
 \lbinom{m+n}{m} = \sum_{\lambda \subseteq m\times n} \sum_{T \in \ml_{\lambda}\times \ml_{\lambda^*}'} w(T),
\end{equation}
where $\lambda \subseteq m\times n$ means the partition $\lambda$ fits inside an $m\times n$ rectangle. Using the matchings shown in Figure \ref{fig:lucastilings} we can verify the case $m=3$ and $n=2$ is given by $\lbinom{5}{3}=s^6 + 5s^4t + 7s^2t^2+2t^3$.

\begin{figure}
\[
\begin{array}{c | c}
\mbox{matching} & \mbox{weight} \\
\hline
& \\
\begin{tikzpicture}[scale=.5,baseline=.5cm]
  \draw (0,0) grid (2,3);
  \draw[line width=2,rounded corners, cap=round] (0,0)--(0,3)--(2,3);
  \foreach \x in {1,...,2}{
   \foreach \y in {1,...,3}{
    \draw (\x-.5,\y-.5) node[circle, inner sep=2, fill=black] {};
   }
  }
  \draw (.5,.5)--(.5,1.5);
  \draw (1.5,.5)--(1.5,1.5);
\end{tikzpicture}
&
s^2t^2 \\
& \\
\begin{tikzpicture}[scale=.5,baseline=.5cm]
  \draw (0,0) grid (2,3);
  \draw[line width=2,rounded corners, cap=round] (0,0)--(0,2)--(1,2)--(1,3)--(2,3);
  \foreach \x in {1,...,2}{
   \foreach \y in {1,...,3}{
    \draw (\x-.5,\y-.5) node[circle, inner sep=2, fill=black] {};
   }
  }
  \draw (.5,.5)--(.5,1.5);
  \draw (1.5,.5)--(1.5,1.5);
\end{tikzpicture}
&
s^2t^2 \\
& \\
\begin{tikzpicture}[scale=.5,baseline=.5cm]
  \draw (0,0) grid (2,3);
  \draw[line width=2,rounded corners, cap=round] (0,0)--(0,2)--(2,2)--(2,3);
  \foreach \x in {1,...,2}{
   \foreach \y in {1,...,3}{
    \draw (\x-.5,\y-.5) node[circle, inner sep=2, fill=black] {};
   }
  }
  \draw (.5,.5)--(.5,1.5);
  \draw (1.5,.5)--(1.5,1.5);
\end{tikzpicture}
&
s^2t^2 \\
& \\
\begin{tikzpicture}[scale=.5,baseline=.5cm]
  \draw (0,0) grid (2,3);
  \draw[line width=2,rounded corners, cap=round] (0,0)--(0,2)--(2,2)--(2,3);
  \foreach \x in {1,...,2}{
   \foreach \y in {1,...,3}{
    \draw (\x-.5,\y-.5) node[circle, inner sep=2, fill=black] {};
   }
  }
  \draw (.5,2.5)--(1.5,2.5);
  \draw (.5,.5)--(.5,1.5);
  \draw (1.5,.5)--(1.5,1.5);
\end{tikzpicture}
&
t^3 \\
& \\
\begin{tikzpicture}[scale=.5,baseline=.5cm]
  \draw (0,0) grid (2,3);
  \draw[line width=2,rounded corners, cap=round] (0,0)--(1,0)--(1,3)--(2,3);
  \foreach \x in {1,...,2}{
   \foreach \y in {1,...,3}{
    \draw (\x-.5,\y-.5) node[circle, inner sep=2, fill=black] {};
   }
  }
  \draw (1.5,.5)--(1.5,1.5);
\end{tikzpicture}
&
s^4t \\
& \\
\begin{tikzpicture}[scale=.5,baseline=.5cm]
  \draw (0,0) grid (2,3);
  \draw[line width=2,rounded corners, cap=round] (0,0)--(1,0)--(1,2)--(2,2)--(2,3);
  \foreach \x in {1,...,2}{
   \foreach \y in {1,...,3}{
    \draw (\x-.5,\y-.5) node[circle, inner sep=2, fill=black] {};
   }
  }
  \draw (1.5,.5)--(1.5,1.5);
\end{tikzpicture}
&
s^4t \\
& \\
\begin{tikzpicture}[scale=.5,baseline=.5cm]
  \draw (0,0) grid (2,3);
  \draw[line width=2,rounded corners, cap=round] (0,0)--(1,0)--(1,2)--(2,2)--(2,3);
  \foreach \x in {1,...,2}{
   \foreach \y in {1,...,3}{
    \draw (\x-.5,\y-.5) node[circle, inner sep=2, fill=black] {};
   }
  }
  \draw (.5,2.5)--(1.5,2.5);
  \draw (1.5,.5)--(1.5,1.5);
\end{tikzpicture}
&
s^2t^2 \\
& \\
\end{array}
\hspace{1cm}
\begin{array}{c | c}
\mbox{matching} & \mbox{weight} \\
\hline
& \\
\begin{tikzpicture}[scale=.5,baseline=.5cm]
  \draw (0,0) grid (2,3);
  \draw[line width=2,rounded corners, cap=round] (0,0)--(2,0)--(2,3);
  \foreach \x in {1,...,2}{
   \foreach \y in {1,...,3}{
    \draw (\x-.5,\y-.5) node[circle, inner sep=2, fill=black] {};
   }
  }
\end{tikzpicture}
&
s^6 \\
& \\
\begin{tikzpicture}[scale=.5,baseline=.5cm]
  \draw (0,0) grid (2,3);
  \draw[line width=2,rounded corners, cap=round] (0,0)--(2,0)--(2,3);
  \foreach \x in {1,...,2}{
   \foreach \y in {1,...,3}{
    \draw (\x-.5,\y-.5) node[circle, inner sep=2, fill=black] {};
   }
  }
  \draw (.5,2.5)--(1.5,2.5);
\end{tikzpicture}
&
s^4t \\
& \\
\begin{tikzpicture}[scale=.5,baseline=.5cm]
  \draw (0,0) grid (2,3);
  \draw[line width=2,rounded corners, cap=round] (0,0)--(2,0)--(2,3);
  \foreach \x in {1,...,2}{
   \foreach \y in {1,...,3}{
    \draw (\x-.5,\y-.5) node[circle, inner sep=2, fill=black] {};
   }
  }
  \draw (.5,1.5)--(1.5,1.5);
\end{tikzpicture}
&
s^4t \\
& \\
\begin{tikzpicture}[scale=.5,baseline=.5cm]
  \draw (0,0) grid (2,3);
  \draw[line width=2,rounded corners, cap=round] (0,0)--(2,0)--(2,3);
  \foreach \x in {1,...,2}{
   \foreach \y in {1,...,3}{
    \draw (\x-.5,\y-.5) node[circle, inner sep=2, fill=black] {};
   }
  }
  \draw (.5,.5)--(1.5,.5);
\end{tikzpicture}
&
s^4t \\
& \\
\begin{tikzpicture}[scale=.5,baseline=.5cm]
  \draw (0,0) grid (2,3);
  \draw[line width=2,rounded corners, cap=round] (0,0)--(2,0)--(2,3);
  \foreach \x in {1,...,2}{
   \foreach \y in {1,...,3}{
    \draw (\x-.5,\y-.5) node[circle, inner sep=2, fill=black] {};
   }
  }
  \draw (.5,2.5)--(1.5,2.5);
  \draw (.5,1.5)--(1.5,1.5);
\end{tikzpicture}
&
s^2t^2 \\
& \\
\begin{tikzpicture}[scale=.5,baseline=.5cm]
  \draw (0,0) grid (2,3);
  \draw[line width=2,rounded corners, cap=round] (0,0)--(2,0)--(2,3);
  \foreach \x in {1,...,2}{
   \foreach \y in {1,...,3}{
    \draw (\x-.5,\y-.5) node[circle, inner sep=2, fill=black] {};
   }
  }
  \draw (.5,2.5)--(1.5,2.5);
  \draw (.5,.5)--(1.5,.5);
\end{tikzpicture}
&
s^2t^2 \\
& \\
\begin{tikzpicture}[scale=.5,baseline=.5cm]
  \draw (0,0) grid (2,3);
  \draw[line width=2,rounded corners, cap=round] (0,0)--(2,0)--(2,3);
  \foreach \x in {1,...,2}{
   \foreach \y in {1,...,3}{
    \draw (\x-.5,\y-.5) node[circle, inner sep=2, fill=black] {};
   }
  }
  \draw (.5,.5)--(1.5,.5);
  \draw (.5,1.5)--(1.5,1.5);
\end{tikzpicture}
&
s^2t^2 \\
& \\
\begin{tikzpicture}[scale=.5,baseline=.5cm]
  \draw (0,0) grid (2,3);
  \draw[line width=2,rounded corners, cap=round] (0,0)--(2,0)--(2,3);
  \foreach \x in {1,...,2}{
   \foreach \y in {1,...,3}{
    \draw (\x-.5,\y-.5) node[circle, inner sep=2, fill=black] {};
   }
  }
  \draw (.5,2.5)--(1.5,2.5);
  \draw (.5,1.5)--(1.5,1.5);
  \draw (.5,.5)--(1.5,.5);
\end{tikzpicture}
& t^3 \\
& \\
\end{array}
\]
\caption{The fifteen tuples of matchings counted by $\lbinom{5}{3}$.}\label{fig:lucastilings}
\end{figure}

To phrase \eqref{eq:lucweight} a bit differently, let $\ml_{\lambda,i}$ denote the set of tuples of matchings in $\ml_{\lambda}\times \ml_{\lambda^*}'$ that have exactly $i$ edges, and let 
\[
 \ml(m+n,m,i) = \bigcup_{\lambda \subseteq m\times n} \ml_{\lambda,i}.
\]
Then, by analogy with Equation \eqref{eq:binomexpansion} we have
\begin{equation}\label{eq:lucasbintiling}
 \lbinom{m+n}{m}  = \sum_{i\geq 0} |\ml(m+n,m,i)|s^{mn-2i} t^i.
\end{equation}

Recalling the specialization $s=1+q$ and $t=-q$ gives $F_n(1+q,-q) = [n+1]$, we have
\[
 \{ n\}!_{s=1+q, t=-q} = \prod_{i=0}^{n-1} F_i(1+q,-q) = [n]!. 
\]
Thus,
\begin{equation}\label{eq:lbintoqbin}
 \lbinom{n}{k}_{s=1+q, t=-q} = \frac{[n]!}{[k]![n-k]!} =\qbinom{n}{k},
\end{equation}
which is known as the \emph{$q$-binomial coefficient}.

The $q$-binomial coefficients have a combinatorial interpretation given by counting lattice paths from $(0,0)$ to $(k,n-k)$ according to area beneath the path. From this it follows that $\qbinom{n}{k}$ is palindromic of palindromic degree $k(n-k)$. This claim can also be justified by observing 
\[
 [k]![n-k]!\qbinom{n}{k} = [n]!,
\]
and $[n]!$ has palindromic degree $1+2+\cdots + (n-1)=\binom{n}{2}$.

In any event, we can put \eqref{eq:lbintoqbin} together with \eqref{eq:lucasbintiling} to obtain
\[
 \qbinom{n}{k} = \lbinom{n}{k}_{s=1+q, t=-q} = \sum_{i\geq 0} (-1)^i|\ml(n,k,i)|q^i(1+q)^{k(n-k)-2i}.
\] 
Therefore the $\gamma$-polynomial of $\qbinom{n}{k}$ is $\lbinom{n}{k}$ evaluated at $s=1$ and $t=-z$.

\begin{prop}\label{prop:gam-qbin}
The $\gamma$-polynomial for the $q$-binomial coefficients is the corresponding lucanomial coefficient evaluated at $s=1, t=-z$, i.e., $\qbinom{n}{k}$ has $\gamma$-polynomial
\[
 \lbinom{n}{k}_{s=1,t=-z}.
\]
\end{prop}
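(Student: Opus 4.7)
The plan is that this proposition is essentially a bookkeeping consequence of the Sagan--Savage tiling identity \eqref{eq:lucasbintiling} together with the specialization \eqref{eq:lbintoqbin}, so I would just carry out the substitution carefully and read off the result.

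First I would recall that Equation \eqref{eq:lucasbintiling} expresses the lucanomial as a polynomial in $s$ and $t$ graded by number of edges:
\[
\lbinom{n}{k}(s,t) = \sum_{i\geq 0} |\ml(n,k,i)|\, s^{k(n-k)-2i}\, t^i,
\]
where I should briefly note why the exponent of $s$ is $k(n-k)-2i$: a tuple of matchings in $\ml_\lambda\times\ml_{\lambda^*}'$ has monomers plus twice its number of dimers equal to the total number of cells in $\lambda$ and $\lambda^*$, which is $mn = k(n-k)$.

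Next I would apply the substitution $s = 1+q$, $t=-q$ and invoke \eqref{eq:lbintoqbin} to obtain
\[
\qbinom{n}{k} \;=\; \lbinom{n}{k}_{s=1+q,\,t=-q} \;=\; \sum_{i\geq 0} (-1)^i |\ml(n,k,i)|\, q^i (1+q)^{k(n-k)-2i}.
\]
Since $\qbinom{n}{k}$ is palindromic of palindromic degree $d = k(n-k)$ (noted in the text), this is exactly the expansion of $\qbinom{n}{k}$ in the basis $\Gamma_d$. Reading off coefficients, we conclude $\gamma_i = (-1)^i|\ml(n,k,i)|$ for the $q$-binomial coefficient.

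Finally, I would assemble the $\gamma$-polynomial and compare with the lucanomial at $s=1, t=-z$:
\[
\gamma_{\qbinom{n}{k}}(z) = \sum_{i\geq 0} \gamma_i z^i = \sum_{i\geq 0} |\ml(n,k,i)|\, (-z)^i = \sum_{i\geq 0} |\ml(n,k,i)|\, 1^{k(n-k)-2i}\,(-z)^i = \lbinom{n}{k}_{s=1,\,t=-z},
\]
which is the claim. There is no real obstacle here; the only place one has to be careful is making sure that the palindromic degree matches the $s$-degree of the lucanomial so that the substitution lines up cleanly with the definition of $\Gamma_d$, and that the sign $(-q)^i$ from the specialization produces exactly the signs $(-z)^i$ of the $\gamma$-polynomial.
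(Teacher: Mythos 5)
Your proposal is correct and follows essentially the same route as the paper: specialize the Sagan--Savage expansion \eqref{eq:lucasbintiling} at $s=1+q$, $t=-q$ via \eqref{eq:lbintoqbin}, note that the palindromic degree of $\qbinom{n}{k}$ is $k(n-k)$, and read off the coefficients in the $\Gamma_d$ basis to identify the $\gamma$-polynomial as $\lbinom{n}{k}_{s=1,\,t=-z}$. No issues.
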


As a combinatorial consequence, we have the following.

\begin{cor}\label{cor:lbinomial}
If $(\gamma_0, \ldots,\gamma_{\lfloor d/2 \rfloor})$ is the $\gamma$-vector of $\qbinom{n}{k}$, then
\[
 \gamma_j = (-1)^j|\ml(n,k,j)|.
\]
\end{cor}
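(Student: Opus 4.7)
The plan is to derive this corollary as an essentially one-line consequence of Proposition \ref{prop:gam-qbin} together with the Sagan--Savage weighted tiling formula \eqref{eq:lucasbintiling}. First, I would invoke the definition of the $\gamma$-polynomial: for a palindromic polynomial $h$ of palindromic degree $d$, one has $\gamma_h(z) = \sum_{j=0}^{\lfloor d/2 \rfloor} \gamma_j z^j$. So extracting $\gamma_j$ amounts to reading off the coefficient of $z^j$ from the polynomial $\lbinom{n}{k}_{s=1,t=-z}$, which Proposition \ref{prop:gam-qbin} identifies with the $\gamma$-polynomial of $\qbinom{n}{k}$.

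Next, I would apply \eqref{eq:lucasbintiling} with $m=k$ and $m+n=n$, specializing $s=1$ and $t=-z$. Since $s^{k(n-k)-2i}=1$ for every $i$ and $t^i = (-1)^i z^i$, the substitution collapses immediately to
\[
\lbinom{n}{k}_{s=1,t=-z} = \sum_{i \geq 0} (-1)^i \, |\ml(n,k,i)| \, z^i.
\]
Comparing coefficients of $z^j$ on both sides yields $\gamma_j = (-1)^j |\ml(n,k,j)|$, which is exactly the claim.

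There is really no obstacle to overcome in this step; all of the substantive work has already been done in establishing \eqref{eq:lucasbintiling} and Proposition \ref{prop:gam-qbin}. The only book-keeping point worth flagging is that the indexing is consistent: the palindromic degree of $\qbinom{n}{k}$ is $d = k(n-k)$, which matches the total degree $mn$ appearing in the Sagan--Savage formula, so the basis element $q^i(1+q)^{d-2i}$ of $\Gamma_d$ lines up with the tiling term of weight $s^{k(n-k)-2i}t^i$, and the ranges of summation agree.
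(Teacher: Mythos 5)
Your proposal is correct and follows essentially the same route as the paper: it combines Proposition \ref{prop:gam-qbin} with the Sagan--Savage expansion \eqref{eq:lucasbintiling} specialized at $s=1$, $t=-z$, and reads off the coefficient of $z^j$, which is exactly how the paper obtains the corollary (the paper just performs the substitution $s=1+q$, $t=-q$ directly to exhibit $\qbinom{n}{k}$ in the $\Gamma_d$ basis before stating the proposition). Your book-keeping remark that $d=k(n-k)$ matches the exponent $mn$ in \eqref{eq:lucasbintiling} is the same indexing check the paper makes.
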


Define the set
\[
 \ml_{n,k}=\bigcup_{i \geq 0} \ml(n,k,i),
\]
i.e., all possible tuples of matchings that come from partitions that fit in a $k\times (n-k)$ rectangle. Call such a tuple a \emph{lucanomial matching}. Then Corollary \ref{cor:lbinomial} means that, up to sign, the $j$th entry of the $\gamma$-vector for $\qbinom{n}{k}$ counts lucanomial matchings with $j$ edges.

\section{Combinatorial expressions for $g$-vectors}
\label{sec:gamma-to-g}

To this point we have shown that for each of three families of polynomials, the gamma vectors are given by a signed count of certain matchings according to the number of edges. For $[n]!$ we count ``fibotorial matchings" in $\mt_n$, for $\prod_{j=1}^n (1+q^j)$ we count ``lucatorial matchings" in $\mt'_n$, and for $\qbinom{n}{k}$ we count ``lucanomial matchings" in $\ml_{n,k}$.

Returning to the unimodality paradigm outlined in Section \ref{sec:indirect}, we want to interpret the entries of the $g$-vector as a linear combination of entries in the corresponding $\gamma$-vector. Applying Equation \eqref{eq:gamtog} in each circumstance yields a combinatorial alternating sum. Generically, we have
\[
g_i = \sum_{j\geq 0} \gamma_j B_d(i,j),
\]
where $d$ is the palindromic degree, and we recall that $B_d(i,j) =|\bp_d(i,j)|$ counts the number of ballot paths of length of $d-2j$ with $i-j$ North steps. We summarize each of these combinatorial formulas in the following theorem.

\begin{thm}\label{thm:main}
The entries of the $g$-vector have the following expressions.
\begin{enumerate}
\item  When $g$ is the $g$-vector of $[n]!$,
\[g_i = \sum_{j\geq 0} (-1)^j|\mt(n,j)\times \bp_d(i,j)|,\]
where $d=\binom{n}{2}$.
\item When $g$ is the $g$-vector of $\prod_{k=1}^n (1+q^k)$,
\[g_i = \sum_{j\geq 0} (-1)^j|\mt'(n,j)\times \bp_d(i,j)|,\]
where $d=\binom{n+1}{2}$.
\item When $g$ is the $g$-vector of $\qbinom{n}{k}$,
\[g_i = \sum_{j\geq 0} (-1)^j|\ml(n,k,j)\times \bp_d(i,j)|,\]
where $d=k(n-k)$.
\end{enumerate}
\end{thm}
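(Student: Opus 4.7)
The proof is essentially an assembly of the ingredients already developed in the paper, so I would keep it short and organize it as a straightforward calculation for each of the three cases, where the same three-step pattern appears:

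First I would invoke the change-of-basis formula \eqref{eq:gamtog}, which reads $g_i = \sum_{j \geq 0} \gamma_j B_d(i,j)$. Since $B_d(i,j) = |\bp_d(i,j)|$ is the number of ballot paths of length $d-2j$ with $i-j$ North steps, and since for each of the three families we have already identified $(-1)^j \gamma_j$ with the cardinality of a set of matchings (namely $\mt(n,j)$, $\mt'(n,j)$, or $\ml(n,k,j)$ respectively), the identity $|A \times B| = |A| \cdot |B|$ rewrites each term of the sum as $(-1)^j$ times a cardinality of a cartesian product. This gives the three displayed formulas term by term.

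The only thing left to verify in each case is the value of the palindromic degree $d$. Since the palindromic degree is additive under taking products of palindromic polynomials, and since each factor's palindromic degree is immediate, I would record these as three one-line calculations: $[n]! = [1][2]\cdots[n]$ gives $d = \sum_{k=1}^n (k-1) = \binom{n}{2}$; $\prod_{k=1}^n(1+q^k)$ gives $d = \sum_{k=1}^n k = \binom{n+1}{2}$; and $\qbinom{n}{k}$ has $d = k(n-k)$, which the paper has already justified via $[k]![n-k]!\qbinom{n}{k} = [n]!$ together with $\binom{n}{2} - \binom{k}{2} - \binom{n-k}{2} = k(n-k)$.

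Putting these pieces together: for case (1), Proposition \ref{prop:gam-poly-q-fact} and the corollary that followed it give $\gamma_j = (-1)^j |\mt(n,j)|$, so
\[
g_i = \sum_{j \geq 0} \gamma_j B_d(i,j) = \sum_{j \geq 0} (-1)^j |\mt(n,j)| \cdot |\bp_d(i,j)| = \sum_{j \geq 0} (-1)^j |\mt(n,j) \times \bp_d(i,j)|.
\]
Case (2) is identical but uses Proposition \ref{prop:gam-partn} and the lucatorial corollary to substitute $|\mt'(n,j)|$ for $|\mt(n,j)|$ and the updated degree $d = \binom{n+1}{2}$. Case (3) uses Proposition \ref{prop:gam-qbin} and Corollary \ref{cor:lbinomial} to substitute $|\ml(n,k,j)|$ with $d = k(n-k)$.

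There is no genuine obstacle here: every ingredient has been assembled in the preceding sections, and the theorem is really a packaging statement. The interesting content — a sign-reversing involution actually turning these alternating sums into positive counts — is reserved for the next section and is not needed to prove Theorem \ref{thm:main} itself.
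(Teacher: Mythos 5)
Your proposal is correct and matches the paper's approach exactly: the paper treats Theorem \ref{thm:main} as an immediate summary of substituting the signed matching interpretations of $\gamma_j$ (from the corollaries following Propositions \ref{prop:gam-poly-q-fact}, \ref{prop:gam-partn}, and \ref{prop:gam-qbin}) into the change-of-basis formula \eqref{eq:gamtog}, with $B_d(i,j)=|\bp_d(i,j)|$ and the palindromic degrees computed exactly as you do. Your write-up simply makes explicit the bookkeeping the paper leaves implicit, so there is nothing to add.
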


To prove unimodality, in the first case say, the goal is to find a sign-reversing involution on the set 
\begin{equation}\label{eq:X}
 X= \bigcup_{j \geq 0} \mt(n,j)\times \bp_d(i,j),
\end{equation}
where $i$ is fixed, $d=\binom{n}{2}$, and the sign of a pair $(T,p)$ in $X$ is given by the parity of the number of edges in $T$.

We will demonstrate such an involution for the case of $[n]!$ in Section \ref{sec:n!inv}, though at present we cannot execute this plan in the other cases. We discuss our partial progress in Section \ref{sec:hardcases}

\subsection{A new proof of unimodality for $q$-factorials}\label{sec:n!inv}

Let $X=X_{d,i}$ denote the set of all pairs of the form $(T,p)$ as as described in \eqref{eq:X}. To define our sign-reversing involution on $X$, we will first create a set $Y$ that is in bijection with $X$ to make our involution easier to understand. 

Let $Y= Y_{d,i}$ denote the set of all \emph{decorated ballot paths} of length $d$ with $i$ North steps. These are ballot paths in the usual sense, except that some of the vertices and edges can come in more than one style. First, with $d=1 + 2 +\cdots + (n-1)$, we will draw the vertices $1$, $3$, $6$, \ldots, $(1+2+\cdots +i)$, \ldots in a different color and refer to these vertices as \emph{anchors}. For example, Figure \ref{fig:decpath} shows a path of length $d = 15$. The anchors are in white.

Every decorated path of length $d$ has the same anchor nodes. The interesting decorations come from ``valleys" in the path that are not anchors, i.e., consecutive steps of the form $EN$. We call such a valley an \emph{active valley}. Each such valley can be decorated or not. There are four active valleys shown in Figure \ref{fig:decpath}, and the first and third of these are decorated. In the picture these are indicated with a dashed line.

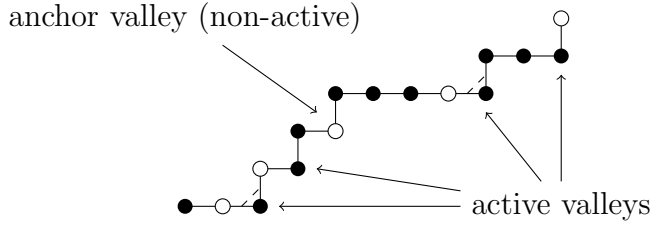
\begin{figure}
\[
\begin{tikzpicture}[scale=.5]
\draw (0,0) node[circle, inner sep=2, fill=black] {} -- (1,0) node[circle, inner sep=2, draw=black, fill=white] {} -- (2,0) node[circle, inner sep=2, fill=black] {} -- (2,1) node[circle, inner sep=2, draw=black, fill=white] {} -- (3,1) node[circle, inner sep=2, fill=black] {}-- (3,2) node[circle, inner sep=2, fill=black] {}-- (4,2) node[circle, inner sep=2, draw=black, fill=white] {} -- (4,3) node[circle, inner sep=2, fill=black] {} -- (5,3) node[circle, inner sep=2, fill=black] {} -- (6,3) node[circle, inner sep=2, fill=black] {} -- (7,3) node[circle, inner sep=2, draw=black, fill=white] {} -- (8,3) node[circle, inner sep=2, fill=black] {} -- (8,4) node[circle, inner sep=2, fill=black] {} -- (9,4) node[circle, inner sep=2, fill=black] {} -- (10,4) node[circle, inner sep=2, fill=black] {}-- (10,5) node[circle, inner sep=2, draw=black, fill=white] {};
\draw (10,0) node (a) {active valleys};
\draw[->] (a)--(2.5,0);
\draw[->] (a)--(3.5,1);
\draw[->] (a)--(8,2.5);
\draw[->] (a)--(10,3.5);
\draw (0,5) node (b) {anchor valley (non-active)};
\draw[->] (b)--(3.5,2.5);
\draw[dashed] (1.5,0)--(2,.5);
\draw[dashed] (7.5,3)--(8,3.5);
\end{tikzpicture}
\]
\caption{A decorated path of length $d=15$.}\label{fig:decpath}
\end{figure}

Notice that while there is a valley after the sixth step in the path, it is not active since it contains an anchor node. When writing decorated paths as words, we put vertical bars in the anchor positions. The path above is written as follows:
\[
 \mathbf{p} = E|en|ENE|NEEE|enEEN.
\]
The decorated valleys are indicated with lowercase letters.

The correspondence between a decorated path like this one and a pair is straightforward. Suppose $\mathbf{p}$ is a decorated ballot path of length $d$ with $i$ North steps and $j$ decorated valleys. We want to associate $\mathbf{p}$ with a pair $(T,p)$, where $T$ is a fibotorial matching with $j$ edges and $p$ is a ballot path with $d-2j$ steps and $i-j$ North steps.

This pair is nearly obvious in $\mathbf{p}$. The edges of the matching $T$ correspond precisely to adjacent lowercase valleys, $en$, that don't have a bar in them. To get the ballot path, we delete all lowercase letters and bars from $\mathbf{p}$. As there are $j$ lowercase ``$en$" pairs, this leaves a ballot path with $d-2j$ steps, $i-j$ of which are North steps.

For example, with $\mathbf{p} = E|en|ENE|NEEE|enEEN$ as in Figure \ref{fig:decpath}, we get
\[
 T = 
( \begin{tikzpicture}
\foreach \x in {0,...,0}{
\draw (\x,0) node[circle, inner sep=2, fill=black] {};
}
\end{tikzpicture},
\begin{tikzpicture}[scale=.5]
\foreach \x in {0,...,1}{
\draw (\x,0) node[circle, inner sep=2, fill=black] {};
}
\draw (0,0)--(1,0);
\end{tikzpicture},
\begin{tikzpicture}[scale=.5]
\foreach \x in {0,...,2}{
\draw (\x,0) node[circle, inner sep=2, fill=black] {};
}
\end{tikzpicture},
\begin{tikzpicture}[scale=.5]
\foreach \x in {0,...,3}{
\draw (\x,0) node[circle, inner sep=2, fill=black] {};
}
\end{tikzpicture},
\begin{tikzpicture}[scale=.5]
\foreach \x in {0,...,4}{
\draw (\x,0) node[circle, inner sep=2, fill=black] {};
}
\draw (0,0)--(1,0);
\end{tikzpicture}
),
\]
and
\[ 
p=E\,ENE\,NEEE\,EEN.
\]

Constructing a decorated path $\mathbf{p}$ from a pair $(T,p)$ is equally straightforward. We think of the letters of $p$ as corresponding to unmatched nodes in $T$. The matched nodes of $T$ correspond to lowercase $en$ pairs, which we insert as the decorated valleys in $\mathbf{p}$. (Notice that inserting an $en$ pair into a ballot path results in another ballot path, i.e., one which does not cross the line $y=x$.) For example, if 
\[
 T = 
( \begin{tikzpicture}
\foreach \x in {0,...,0}{
\draw (\x,0) node[circle, inner sep=2, fill=black] {};
}
\end{tikzpicture},
\begin{tikzpicture}[scale=.5]
\foreach \x in {0,...,1}{
\draw (\x,0) node[circle, inner sep=2, fill=black] {};
}
\end{tikzpicture},
\begin{tikzpicture}[scale=.5]
\foreach \x in {0,...,2}{
\draw (\x,0) node[circle, inner sep=2, fill=black] {};
\draw (1,0)--(2,0);
}
\end{tikzpicture},
\begin{tikzpicture}[scale=.5]
\foreach \x in {0,...,3}{
\draw (\x,0) node[circle, inner sep=2, fill=black] {};
}
\end{tikzpicture},
\begin{tikzpicture}[scale=.5]
\foreach \x in {0,...,4}{
\draw (\x,0) node[circle, inner sep=2, fill=black] {};
}
\draw (0,0)--(1,0);
\draw (2,0)--(3,0);
\end{tikzpicture}
), 
\]
and
\[
p=E\, EE\, N \,EENN\, E,
\]
then 
\[
\mathbf{p} = E|EE|Nen|EENN|enenE.
\]

Let us summarize this correspondence in the following proposition.

\begin{prop}
The sets $X_{d,i}$ and $Y_{d,i}$ are in bijection as described above. If 
\[
 \mathbf{p} \leftrightarrow (T,p),
\]
the number of decorated valleys in $\mathbf{p}$ equals the number of edges in $T$.\end{prop}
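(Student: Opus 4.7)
The plan is to verify the stated bijection by first establishing a key lemma about ballot paths and then checking that the forward and backward maps from the text are both well-defined and mutually inverse.

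The key lemma is that inserting or deleting an $EN$ pair at any position in a ballot path yields another ballot path. Indeed, at every prefix of the modified path, the difference (number of $E$'s minus number of $N$'s) either equals the corresponding difference for the original path, or is one larger --- the latter happens only for a prefix ending just after the inserted $E$ but before the inserted $N$ --- so the ballot condition is preserved in every case.

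For the forward direction, given $\mathbf{p} \in Y_{d,i}$, the $n-1$ anchor vertices split $\mathbf{p}$ into chunks of lengths $1, 2, \ldots, n-1$, since $d = \binom{n}{2} = 1 + 2 + \cdots + (n-1)$. Within chunk $k$, two decorated valleys cannot share a letter, because no letter can simultaneously be the $E$ in one $en$ pair and the $N$ in another; hence the decorated valleys in chunk $k$ specify a matching $T_k$ on the $k$-node path (its edges join adjacent positions in a pairwise non-adjacent way). The tuple $T = (T_1, \ldots, T_{n-1})$ lies in $\mt(n, j)$, where $j$ is the total number of decorated valleys in $\mathbf{p}$. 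Deleting all lowercase letters and anchor bars leaves a lattice path $p$ with $d - 2j$ steps, of which $i - j$ are North; by repeated application of the key lemma, $p$ remains a ballot path, so $p \in \bp_d(i, j)$.

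For the backward direction, given $(T, p) \in X_{d,i}$ with $T_k$ having $j_k$ edges and $k - 2j_k$ monomers, I would distribute the letters of $p$ in order across the chunks, with chunk $k$ consuming its next $k - 2j_k$ letters and placing them at the monomer positions of $T_k$; then insert $en$ pairs at the edge positions of $T_k$. The key lemma ensures each such insertion preserves the ballot condition, and the anchors automatically land at vertices $1, 3, 6, \ldots$ because the cumulative chunk lengths are exactly these values. That the two maps are mutual inverses is immediate from their construction, as is the equality between the number of edges in $T$ and the number of decorated valleys in $\mathbf{p}$, since decorated valleys correspond chunk-by-chunk to edges of the $T_k$. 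The only delicate point is the ballot-path preservation, and this is handled entirely by the key lemma, so there is no serious obstacle --- the real content of the proposition is the \emph{construction} of the bijection itself, which organizes the product $\mt(n, j) \times \bp_d(i, j)$ into a single combinatorial object amenable to the sign-reversing involution of the next subsection.
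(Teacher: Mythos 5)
Your proposal is correct and matches the paper's approach: the paper treats this proposition as a summary of the forward map (edges from lowercase $en$ valleys, ballot path by deleting lowercase letters and bars) and the backward map (inserting $en$ pairs at matched nodes), noting only parenthetically that inserting an $en$ pair preserves the ballot condition. Your prefix-difference lemma and the chunk decomposition at the anchor vertices simply make explicit the verifications the paper leaves implicit.
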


Given this proposition, we construct an involution $\iota : Y \to Y$ that matches decorated paths with an odd number of decorated valleys with those having an even number of decorated valleys.

The involution is simple. If $\mathbf{p}$ is a decorated path with at least one active valley, then $\iota$ ``toggles" the first active valley from upper to lower case or vice-versa, while keeping the rest of the path fixed. For example, if
\[
\mathbf{p} = E|EE|Nen|EENN|enenE,
\]
then
\[
\iota(\mathbf{p}) = E|EE|NEN|EENN|enenE.
\]

If $\mathbf{p}$ has no active valleys (e.g., if $\mathbf{p}$ is the path consisting of all East steps), then $\iota(\mathbf{p}) = \mathbf{p}$.

In either case, it is clear that $\iota(\iota(\mathbf{p}))=\mathbf{p}$ for all $\mathbf{p}$ in $Y$, so $\iota$ is an involution.

The $\iota$ map changes the parity of the number of decorated valleys, so it does indeed change sign, and all the fixed points have zero active valleys (and hence positive sign). As a corollary of this involution we can characterize the $g$-vector of $[n]!$.

\begin{cor}
The entries of the $g$-vector for $[n]!$ are given by
\[
 g_i = |\{ \mathbf{p} \in Y_{d,i} : \mathbf{p} \mbox{ has no active valleys }\}|, 
\]
where $d = \binom{n}{2}$.
\end{cor}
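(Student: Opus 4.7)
The plan is to combine Theorem~\ref{thm:main}(1), the bijection $X_{d,i} \leftrightarrow Y_{d,i}$ from the preceding proposition, and the involution $\iota$ on $Y_{d,i}$, via the standard sign-reversing involution principle. Writing $v(\mathbf{p})$ for the number of decorated valleys in a decorated path $\mathbf{p}$, the bijection sends a pair $(T,p) \in \mt(n,j) \times \bp_d(i,j)$ to a decorated path with $v(\mathbf{p}) = j$. Transporting the sum in Theorem~\ref{thm:main}(1) from $X_{d,i}$ to $Y_{d,i}$ therefore rewrites the $g$-vector as a signed sum over decorated paths:
\[
g_i = \sum_{j \geq 0} (-1)^j\, |\mt(n,j) \times \bp_d(i,j)| = \sum_{\mathbf{p} \in Y_{d,i}} (-1)^{v(\mathbf{p})}.
\]

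Next I would verify the key properties of $\iota$. Toggling the first active valley neither alters the underlying lattice path nor disturbs the anchor positions, so $\iota(\mathbf{p})$ is again an element of $Y_{d,i}$. The valley singled out by $\iota$ is unchanged by the toggle, since toggling only modifies case (and the presence of the dashed decoration) while preserving the $EN$ shape at that location; applying $\iota$ a second time therefore reverses the first toggle and recovers $\mathbf{p}$, so $\iota$ is an involution. Each toggle changes $v(\mathbf{p})$ by exactly one, so $\iota$ is sign-reversing on every path that possesses at least one active valley. Consequently the fixed points of $\iota$ are precisely the decorated paths with no active valleys, and these all satisfy $v(\mathbf{p}) = 0$.

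Applying the standard cancellation argument to the signed sum, the non-fixed points pair up with opposite sign and cancel, leaving
\[
g_i = \sum_{\mathbf{p} \in \mathrm{Fix}(\iota)} (-1)^{0} = |\{ \mathbf{p} \in Y_{d,i} : \mathbf{p} \text{ has no active valleys}\}|,
\]
which is the claimed formula. The only delicate point is the stability of the ``first active valley'' under the toggle, which is what makes $\iota$ truly an involution; everything else—the change of basis $B_d$, the characterization of the $\gamma$-vector via Proposition~\ref{prop:gam-poly-q-fact}, and the combinatorial bijection $X_{d,i} \leftrightarrow Y_{d,i}$—is already established in the preceding sections, so no further obstacle arises.
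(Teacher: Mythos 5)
Your proposal is correct and follows essentially the same route as the paper: transport the alternating sum of Theorem \ref{thm:main}(1) through the bijection $X_{d,i}\leftrightarrow Y_{d,i}$, then cancel via the toggle-the-first-active-valley involution $\iota$, whose fixed points are exactly the decorated paths with no active valleys (hence sign $+1$). Your explicit check that the toggled valley remains the first active valley is precisely the point the paper leaves implicit, so there is no gap.
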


We know of no simpler (manifestly positive) description for the entries of this $g$-vector. As these numbers are not available in the Online Encyclopedia of Integer Sequences (OEIS) \cite{oeis}, we provide some more detail here.

Let $g_{n,i}$ denote the $i$th entry for the $g$-vector of $[n]!$, where $i$ ranges from $0$ to $\lfloor d/2 \rfloor = \lfloor \binom{n}{2}/2 \rfloor$ and $g_{n,i}=0$ otherwise. We have these numbers, for small $n$, in Table \ref{tab:gni}. 

\begin{table}
\[
 \begin{array}{c|rrrrrrrrrrrrrrrrrr}
  n \backslash k & 0 & 1 & 2 & 3 & 4 & 5 & 6 & 7 & 8 & 9 & 10 & 11 & 12 & 13 & 14  \\
  \hline
  1 & 1 \\
  2 & 1 \\
  3 & 1 & 1 \\
  4 & 1 & 2 & 2 & 1 \\
  5 & 1 & 3 & 5 & 6 & 5 & 2 \\
  6 & 1 & 4 & 9 & 15 & 20 & 22 & 19 & 11\\
  7 & 1 & 5 & 14 & 29 & 49 & 71 & 90 & 100 & 96 & 76 & 42\\
  8 & 1 & 6 & 20 & 49 & 98 & 169 & 259 & 359 & 454 & 525 & 553 & 524 & 433 & 286 & 100
 \end{array}
\]
\caption{The entries of the $g$-vector for $[n]!$.}\label{tab:gni}
\end{table}

From our lattice path description, we can show these numbers are given by the following linear recurrence. 

\begin{prop}\label{prop:rec}
For any $n \geq 2$, we have 
\[
 g_{n,i} = \begin{cases} \displaystyle\sum_{j = i-(n-1)}^i g_{n-1,j} & \mbox{if } i \leq \lceil \binom{n-1}{2}/2 \rceil,\\
  \displaystyle\sum_{j=i-(n-1)}^{\binom{n}{2} -i - (n-1)} g_{n-1,j} & \mbox{if } i > \lceil \binom{n-1}{2}/2 \rceil.
  \end{cases} 
\]
\end{prop}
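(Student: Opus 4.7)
The plan is to prove the recurrence directly from the combinatorial description of $g_{n,i}$ given in the preceding corollary. A decorated path of length $\binom{n}{2}$ with no active valleys is partitioned by the anchors into $n-1$ consecutive blocks of sizes $1, 2, \ldots, n-1$, and the ``no active valley'' condition is equivalent to requiring that within each block all North steps precede all East steps. The recurrence will then follow by concatenation: condition on what happens in the first $n-2$ blocks (of total length $\binom{n-1}{2}$) versus the last block (of length $n-1$).

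The first observation I would establish is that the prefix of length $\binom{n-1}{2}$ is itself a decorated path with no active valleys for the $(n-1)$-case, since the anchor positions $1, 3, 6, \ldots, \binom{n-1}{2}$ used in the $(n-1)$-case agree with the first $n-2$ anchors of the $n$-case. So if the prefix uses $j$ North steps, then by the corollary it is one of $g_{n-1,j}$ such paths. The final block is then forced to be $N^{i-j}E^{(n-1)-(i-j)}$, and this completion produces a valid ballot path precisely when (a) $0 \leq i-j \leq n-1$, equivalently $i-(n-1) \leq j \leq i$, and (b) the peak of the last block, at $(\binom{n-1}{2}-j,\,i)$, lies weakly below the diagonal, i.e., $j \leq \binom{n-1}{2}-i = \binom{n}{2}-i-(n-1)$. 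Combining these gives
\[
 g_{n,i} \;=\; \sum_{j=i-(n-1)}^{\min(i,\,\binom{n}{2}-i-(n-1))} g_{n-1,j},
\]
with the convention that $g_{n-1,j}=0$ for $j$ outside $[0, \lfloor\binom{n-1}{2}/2\rfloor]$.

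It then remains to remove the minimum in the upper limit by case analysis on $i$. When $i \leq \lceil\binom{n-1}{2}/2\rceil$, one checks $\binom{n-1}{2}-i \geq \lfloor\binom{n-1}{2}/2\rfloor$, so constraint (b) is implied by the vanishing of $g_{n-1,j}$ and the effective upper bound may be written as $i$. When $i > \lceil\binom{n-1}{2}/2\rceil$, constraint (b) becomes strictly tighter than the vanishing constraint, so the upper bound becomes $\binom{n}{2}-i-(n-1)$. The main obstacle is the bookkeeping around these boundary inequalities (especially when $\binom{n-1}{2}$ is odd and the floor/ceiling split is nontrivial); apart from that, the argument is a routine concatenation bijection on ballot paths.
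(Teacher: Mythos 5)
Your proposal is correct and follows essentially the same route as the paper's proof: decompose a no-active-valley decorated ballot path into its first $n-2$ anchor blocks (a path counted by $g_{n-1,j}$) and a final forced valley-less block of length $n-1$, then sum over the number $j$ of North steps in the prefix. Your explicit peak condition $i \leq \binom{n-1}{2}-j$ and the resulting case analysis simply make precise the boundary check that the paper dispatches with ``by careful examination,'' so the two arguments coincide in substance.
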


\begin{proof}
A valley-less path is a path that has no $EN$ in it. There are clearly $n$ valley-less paths with $n-1$ steps: there can be no North steps, $EE\cdots E$, one North step, $NEE\cdots E$, two North steps, $NNE\cdots E$, and so on, up to the path with all North steps, $NN\cdots N$.

Each decorated ballot path with no active valleys consists of the concatenation of ordinary valley-less paths, of lengths $1$, $2$, $3$, and so on. However, the ballot condition means that not every concatenation of valley-less paths is a decorated ballot path with no active valleys. 

Suppose $p = p_1 | p_2|\cdots |p_{n-2}|p_{n-1}$ is a decorated ballot path with no active valleys, so that $p_k$ is a valley-less path of length $k$. Then the path $p' = p_1 | p_2|\cdots |p_{n-2}$ is also a decorated ballot path with no active valleys. Moreover, if $p$ has $i$ North steps and $p'$ has $j$ North steps, then $p_{n-1}$ is the unique valley-less path of length $n-1$ with $i-j$ North steps.

Thus if we group the elements of $Y_{\binom{n}{2},i}$ according to the number of North steps in their final valley-less path, we get
\[
 g_{n,i} \leq g_{n-1,i} + g_{n-1,i-1} + g_{n-1,i-2} + \cdots + g_{n-1,i-(n-1)},
\]
since this final path can have anywhere from $0$ to $n-1$ North steps.

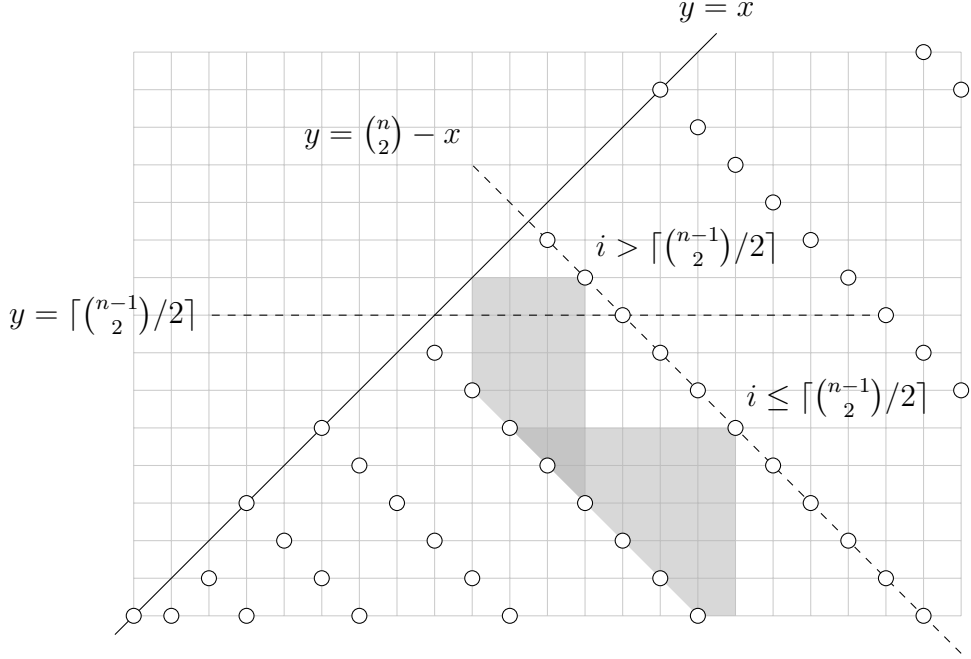
\begin{figure}
\[
 \begin{tikzpicture}[scale=.5]
 \draw[color=white!80!black] (0,0) grid (22,15);
 \draw[draw=none,fill=white!70!black, opacity=.5] (12,9)--(12,3)--(9,6)--(9,9)--(12,9)--cycle;
 \draw[draw=none,fill=white!70!black, opacity=.5] (16,5)--(16,0)--(15,0)--(10,5)--(16,5)--cycle;
 \draw (-.5,-.5)--(15.5,15.5) node[above] {$y=x$};
 \draw[dashed] (22,-1)--(9,12) node[above left] {$y=\binom{n}{2}-x$};
 \draw[dashed] (20,8)--(2,8) node[left] {$y=\lceil \binom{n-1}{2}/2 \rceil$};
 \draw (22,14) node[circle,draw=black,fill=white,inner sep=2] {};
 \draw (21,15) node[circle,draw=black,fill=white,inner sep=2] {};
 \foreach \x in {6,...,14}{
  \draw (28-\x,\x) node[circle,draw=black,fill=white,inner sep=2] {};
 }
 \foreach \x in {0,...,10}{
  \draw (21-\x,\x) node[circle,draw=black,fill=white,inner sep=2] {};
 }
 \foreach \x in {0,...,7}{
  \draw (15-\x,\x) node[circle,draw=black,fill=white,inner sep=2] {};
 }
 \foreach \x in {0,...,5}{
  \draw (10-\x,\x) node[circle,draw=black,fill=white,inner sep=2] {};
 }
 \foreach \x in {0,...,3}{
  \draw (6-\x,\x) node[circle,draw=black,fill=white,inner sep=2] {};
 }
 \foreach \x in {0,...,1}{
  \draw (3-\x,\x) node[circle,draw=black,fill=white,inner sep=2] {};
 }
 \draw (1,0) node[circle,draw=black,fill=white,inner sep=2] {};
 \draw (0,0) node[circle,draw=black,fill=white,inner sep=2] {};
 \draw (12,9) node[above right] {$i>\lceil \binom{n-1}{2}/2 \rceil$};
  \draw (16,5) node[above right] {$i\leq \lceil \binom{n-1}{2}/2 \rceil$};
 \end{tikzpicture}
\]
\caption{The boundary conditions for the recurrence given in Proposition \ref{prop:rec}. Possible locations for anchor nodes are indicated in white.}\label{fig:boundary}
\end{figure}

Fix $j$ between $i$ and $i-(n-1)$. 

For some choices of $j$, every ballot path $p'$ of length $\binom{n-1}{2}$ with $j$ North steps can be uniquely extended to a ballot path of length $\binom{n}{2}$ with the valley-less path consisting of $i-j$ North steps. In this case the inequality above is an equality. This can be done precisely when $i \leq \lceil \binom{n-1}{2} \rceil$.

However, if $j$ and $i$ are too close to $\lfloor \binom{n}{2}/2\rfloor$, appending this valley-less path will take the lattice path above the line $y=x$, violating the ballot condition. By careful examination, we find this happens when $i > \lceil \binom{n-1}{2}/2 \rceil$ and $j > \binom{n}{2}-i-(n-1)$. Hence we arrive at the cases stated in the proposition. See the illustration in Figure \ref{fig:boundary}.
\end{proof}

\subsection{The harder cases}\label{sec:hardcases}

We would like to report that we can replicate the approach of Section \ref{sec:n!inv} for the polynomials $\prod_{k=1}^n(1+q^k)$ and $\qbinom{n}{k}$. The combinatorial setup is there in Theorem \ref{thm:main}. All that we lack is a clever sign-reversing involution.

For $\prod_{k=1}^n(1+q^k)$, the difficulty seems to be that there is no canoncial linear ordering on the edges in a cyclic matching, and hence ballot paths and lucatorial matchings are not as obviously compatible as ballot paths and fibotorial matchings.

With $\qbinom{n}{k}$, the edges within each row of a given Ferrers diagram $\lambda$ (and within each column of $\lambda^*$) are linearly ordered. However, the tuple of matchings of the rows does not have a canonical ordering. Moreover, as our lucanomial matchings range over different partitions $\lambda$, the number and size of the rows varies. Thus lucanomial matchings prove tricky to relate to ballot paths as well. A sign-reversing involution using this model will require some subtlety.


\begin{thebibliography}{99}

\bibitem{ACMS} T. Amdeberhan, X. Chen, V. H. Moll, and B. E. Sagan, \emph{Generalized Fibonacci polynomials and Fibonomial coefficients.} Ann. Comb. 18 (2014), no. 4, 541--562.

\bibitem{Andrews1} G. Andrews, \emph{A theorem on reciprocal polynomials with applications to permutations and compositions.} Amer. Math. Monthly 82 (1975), no. 8, 830--833.

\bibitem{Andrews} G. Andrews, \emph{Reciprocal polynomials and quadratic transformations.} Utilitas Math. 28 (1985), 255--264. 

\bibitem{BQ} A. Benjamin and J. Quinn, \emph{An alternate approach to alternating sums: a method to DIE for.} College Math. J. 39 (2008), no. 3, 191--201.

\bibitem{Brandensurvey} P. Branden, \emph{Unimodality, log-concavity, real-rootedness and beyond.} Handbook of Enumerative Combinatorics. Chapman and Hall/CRC (2015) 437--483.

\bibitem{Brentiunimodal} F. Brenti, \emph{Log-concave and unimodal sequences in algebra, combinatorics, and geometry: an update.} Jerusalem combinatorics '93, 71--89, Contemp. Math., 178, Amer. Math. Soc., Providence, RI, 1994.

\bibitem{Butler} L. Butler, \emph{A unimodality result in the enumeration of subgroups of a finite abelian group.} Proc. Amer. Math. Soc. 101 (1987), no. 4, 771--775.

\bibitem{Cigler} J. Cigler, \emph{A new class of q-Fibonacci polynomials.} Electron. J. Combin. 10 (2003), Research Paper 19, 15 pp.

\bibitem{FS} D. Foata and M.-P. Sch\"utzenberger, ``Th\'eorie g\'eom\'etrique des polyn\^omes eul\'eriens." Lecture Notes in Mathematics, Vol. 138 Springer-Verlag, Berlin-New York 1970 v+94 pp.

\bibitem{Gal} S. Gal, \emph{Real root conjecture fails for five- and higher-dimensional spheres.} Discrete Comput. Geom. 34 (2005), no. 2, 269--284.

\bibitem{NPT} E. Nevo, T. K. Petersen, and B. E. Tenner, \emph{The $\gamma$-vector of a barycentric subdivision.} J. Combin. Theory Ser. A 118 (2011), no. 4, 1364--1380.

\bibitem{OR} A. M. Odlyzko and L. B. Richmond, \emph{On the unimodality of some partition polynomials.} European J. Combin. 3 (1982), no. 1, 69--84.

\bibitem{ohara} K. O'Hara, \emph{Unimodality of Gaussian coefficients: a constructive proof.} J. Combin. Theory Ser. A 53 (1990), no. 1, 29--52. 

\bibitem{oeis} The On-Line Encyclopedia of Integer Sequences, published electronically at http://oeis.org, 2015.

\bibitem{Sagan-Savage} B. Sagan, C. Savage, \emph{Combinatorial interpretations of binomial coefficient analogues related to Lucas sequences.} Integers 10 (2010), A52, 697--703.

\bibitem{Stanley1980} R. P. Stanley, \emph{Weyl groups, the hard Lefschetz theorem, and the Sperner property.} SIAM J. Algebraic Discrete Methods 1 (1980), no. 2, 168--184.

\bibitem{Stanleyunimodal} R. P. Stanley, \emph{Log-concave and unimodal sequences in algebra, combinatorics, and geometry.} Graph theory and its applications: East and West (Jinan, 1986), 500--535, Ann. New York Acad. Sci., 576, New York Acad. Sci., New York, 1989.

\bibitem{Stanleycomm} R. P. Stanley, ``Combinatorics and commutative algebra." Second edition. Progress in Mathematics, 41. BirkhŠuser Boston, Inc., Boston, MA, 1996. x+164 pp.

\bibitem{StanleyZanello} R. P. Stanley and F. Zanello, \emph{Unimodality of partitions with distinct parts inside Ferrers shapes.} European J. Combin. 49 (2015), 194--202.

\bibitem{Z} D. Zeilberger, \emph{Kathy O'Hara's constructive proof of the unimodality of the Gaussian polynomials.} Amer. Math. Monthly 96 (1989), no. 7, 590--602.

\bibitem{Z2} D. Zeilberger, \emph{Proof of Kyle Petersen's amazing conjecture relating the $q$- and Fibonacci analogs of $n!$.} Personal Journal of Shalosh B. Ekhad and Doron Zeilberger, \verb"http://www.math.rutgers.edu/~zeilberg/pj.html"

\end{thebibliography}
\end{document}